\documentclass[letterpaper,11pt,twoside,keywordsasfootnote,addressatend,noinfoline]{article}
\usepackage{fullpage}
\usepackage[english]{babel}
\usepackage{amssymb}
\usepackage{amsmath}
\usepackage{theorem}
\usepackage{epsfig}
\usepackage{subfigure}
\usepackage{mathrsfs}
\usepackage{color}
\usepackage{imsart}

\newtheorem{theorem}{Theorem}

\newtheorem{lemma}[theorem]{Lemma}

\newenvironment{proof}{\noindent{\bf Proof.}}{\hspace*{2mm}~$\square$}
\newenvironment{proofof}[1]{\noindent{\bf Proof of #1.}}{\hspace*{2mm}~$\square$}

\newcommand{\N}{\mathbb{N}}

\newcommand{\G}{\mathscr G}
\newcommand{\V}{\mathscr V}
\newcommand{\E}{\mathscr E}
\newcommand{\C}{\mathscr C}

\newcommand{\ind}{\mathbf{1}}
\newcommand{\ep}{\epsilon}
\newcommand{\n}{\hspace*{-5pt}}
\newcommand{\unif}{X}
\newcommand{\imex}{Y}
\newcommand{\save}{Z}

\DeclareMathOperator{\card}{card}
\DeclareMathOperator{\uniform}{Uniform \,}


\begin{document}

\begin{frontmatter}
\title     {Rigorous results for the distribution of \\ money on connected graphs}
\runtitle  {distribution of money on connected graphs}
\author    {Nicolas Lanchier\thanks{Research supported in part by NSA Grant MPS-14-040958.} and Stephanie Reed}
\runauthor {Nicolas Lanchier and Stephanie Reed}
\address   {School of Mathematical and Statistical Sciences \\ Arizona State University \\ Tempe, AZ 85287, USA.}

\maketitle

\begin{abstract} \ \
 This paper is concerned with general spatially explicit versions of three stochastic models for the dynamics of money that have been introduced and
 studied numerically by statistical physicists:
 the uniform reshuffling model, the immediate exchange model and the model with saving propensity.
 All three models consist of systems of economical agents that consecutively engage in pairwise monetary transactions.
 Computer simulations performed in the physics literature suggest that, when the number of agents and the average amount of money per agent
 are large, the distribution of money at equilibrium approaches the exponential distribution for the first model, the gamma distribution with shape
 parameter two for the second model and a gamma distribution whose shape parameter depends on the saving propensity for the third model.
 The main objective of this paper is to give rigorous proofs of and extend these conjectures to generalizations of the first two models and a
 variant of the third model that include local rather than global interactions, i.e., instead of choosing the two interacting agents uniformly at
 random from the system, the agents are located on the vertex set of a general connected graph and can only interact with their neighbors.
\end{abstract}

\begin{keyword}[class=AMS]
\kwd[Primary ]{60K35, 91B72}
\end{keyword}

\begin{keyword}
\kwd{Interacting particle systems, econophysics, distribution of money, uniform reshuffling, immediate exchange model, uniform saving.}
\end{keyword}

\end{frontmatter}


\section{Introduction}
\label{sec:intro}

\indent The objective of this paper is to give rigorous proofs of various conjectures (as well as extensions of these conjectures) about general spatially
 explicit versions of models in econophysics describing the dynamics of money within a population of economical agents.
 The term econophysics was coined by physicist Eugene Stanley to refer to the subfield of statistical physics that applies concepts from traditional
 physics to economics.
 The terminology is motivated by the idea that molecules can be viewed as individuals, energy as money, and collisions between two molecules as exchanges
 of money between two individuals.
 The models we consider in this paper are known in the mathematics literature as interacting particle systems~\cite{liggett_1985} and are inspired from
 models for the dynamics of money reviewed in~\cite{yakovenko_rosser_2009} that consist of large systems of~$N$ economic agents that interact to engage
 in pairwise monetary transactions.
 The models in~\cite{yakovenko_rosser_2009} are examples of discrete-time Markov chains where, at each time step, two agents are selected uniformly at
 random to interact, which results in an exchange of money between the two agents in an overall conservative system, meaning that the total amount of money
 in the system, say~$M$ dollars, remains constant.
 By analogy with the temperature in physics, the average amount of money per agent~$T = M/N$ is called the money temperature.
 The main problem about these models is to find the distribution of money, i.e., the probability that a given agent has a given amount of money, at
 equilibrium. \\
\indent The first paper introducing such models is~\cite{dragulescu_yakovenko_2000} where several rules for the exchange of money are considered.
 In the most natural version, called the uniform reshuffling model, the total amount of money the two interacting agents possess before the interaction
 is uniformly redistributed between the two agents after the interaction.
 More precisely, using the same notation as in the review~\cite{yakovenko_rosser_2009} and letting~$m_i$ and~$m'_i$ be the amount of money agent~$i$ has
 before and after the interaction, respectively, an interaction between agents~$i$ and~$j$ results in the update
\begin{equation}
\label{eq:reshuffling}
  \begin{array}{rcl}
    m_i & \n \to \n & m'_i = \ep \,(m_i + m_j) \vspace*{2pt} \\
    m_j & \n \to \n & m'_j = (1 - \ep)(m_i + m_j) \end{array} \quad \hbox{where} \quad \ep = \uniform (0, 1).
\end{equation}
 The computer agent-based simulations performed in~\cite{dragulescu_yakovenko_2000} suggest that, for all the rules under consideration
 including~\eqref{eq:reshuffling}, the limiting distribution of money approaches an exponential distribution with mean~$T$ when both the population
 size~$N$ and the money temperature~$T$ are large, i.e., the probability that a given individual has~$m$ dollars at equilibrium approaches
 $$ P (m) = \frac{1}{T} \ e^{- m/T} \quad \hbox{where} \quad T = M/N. $$
 Strictly speaking, since the independent uniform random variables~$\ep$ used at each update are continuous, the probability of having exactly~$m$ dollars
 at equilibrium is equal to zero, so the limit above given in the physics literature has to be understood as follows:
 The probability that a given individual has at least~$m$ dollars at equilibrium approaches
 $$ \int_m^{\infty} \frac{1}{T} \ e^{- x/T} \,dx = e^{-m / T}. $$
 One of the models in~\cite{dragulescu_yakovenko_2000} assumes that one of the two interacting agents chosen at random gives one dollar to the
 other agent if she indeed has at least one dollar.
 The simulations in~\cite{dragulescu_yakovenko_2000} suggest that the limiting distribution of money for this model also approaches the exponential
 distribution, which has been proved analytically and extended to general spatial models in~\cite{lanchier_2017b}. \\
\indent The second model we consider in this paper is inspired from the so-called immediate exchange model introduced and studied numerically
 in~\cite{heinsalu_patriarca_2014}.
 In this model, two agents are again chosen uniformly at random at each time step, but we now assume that each of the two interacting agents gives
 a random fraction of her fortune to the other agent.
 More precisely, an interaction between agents~$i$ and~$j$ results in the update
\begin{equation}
\label{eq:exchange}
  \begin{array}{rcl}
    m_i & \n \to \n & m'_i = (1 - \ep_i) \,m_i + \ep_j \,m_j \vspace*{2pt} \\
    m_j & \n \to \n & m'_j = (1 - \ep_j) \,m_j + \ep_i \,m_i \end{array} \quad \hbox{where} \quad \ep_i, \ep_j = \uniform (0, 1)
\end{equation}
 are independent.
 Note that the uniform reshuffling model~\eqref{eq:reshuffling} can be obtained from the immediate exchange model~\eqref{eq:exchange} by assuming
 that the two uniform random variables used at each update are not independent but instead satisfy~$\ep_i + \ep_j = 1$.
 Interestingly, this slight change in the interaction rules creates a new behavior.
 Indeed, the numerical simulations in~\cite{heinsalu_patriarca_2014} suggest that the limiting distribution of money now approaches a gamma distribution
 with mean~$T$ and shape parameter two when the population size and the money temperature are large:
 $$ P (m) = \frac{4m}{T^2} \ e^{- 2m/T} \quad \hbox{where} \quad T = M/N. $$
 As previously, this limit has to be understood as follows:
 The probability that a given individual has at least~$m$ dollars at equilibrium approaches
 $$ \int_m^{\infty} \frac{4x}{T^2} \ e^{- 2x/T} \,dx = \bigg(1 + \frac{2m}{T} \bigg) \,e^{-2m / T}. $$
 Shortly after the publication of~\cite{heinsalu_patriarca_2014}, the convergence to the gamma distribution has been proved analytical
 in~\cite{katriel_2015} for an infinite-population version of the immediate exchange model. \\
\indent The third and last model we consider in this paper is inspired from another generalization of the uniform reshuffling model that includes saving
 propensity~\cite{chakraborti_chakrabarti_2000}.
 The two interacting agents now save a fixed fraction~$\lambda$ of their fortune and only the combined remaining fortune is reshuffled randomly between
 the two agents, which makes the uniform reshuffling model the particular case~$\lambda = 0$.
 In equations, an interaction between agents~$i$ and~$j$ results in the update
\begin{equation}
\label{eq:saving}
 \begin{array}{rcl}
   m_i & \n \to \n & m'_i = \lambda \,m_i + \ep \,(1 - \lambda)(m_i + m_j) \vspace*{2pt} \\
   m_j & \n \to \n & m'_j = \lambda \,m_j + (1 - \ep)(1 - \lambda)(m_i + m_j) \end{array} \quad \hbox{where} \quad \ep = \uniform (0, 1).
\end{equation}
 As for the immediate exchange model, the computer simulations performed in~\cite{patriarca_chakraborti_kashi_2004} suggest that the distribution of money
 at equilibrium approaches a gamma distribution with mean~$T$ but the shape parameter~$r$ now depends on the saving propensity.
 More precisely, the probability that a given agent has~$m$ dollars at equilibrium now approaches
 $$ P (m) = \frac{1}{\Gamma (r)} \bigg(\frac{r}{T} \bigg)^r m^{r - 1} \,e^{- rm/T} \quad \hbox{where} \quad
        T = M/N \quad \hbox{and} \quad r = \frac{1 + 2 \lambda}{1 - \lambda} $$
 which again has to be understood as follows:
 The probability that a given individual has at least~$m$ dollars at equilibrium approaches
 $$ \int_m^{\infty} \frac{1}{\Gamma (r)} \bigg(\frac{r}{T} \bigg)^r x^{r - 1} \,e^{- rx/T} \,dx \quad \hbox{where} \quad r = \frac{1 + 2 \lambda}{1 - \lambda}. $$
 Note that the distribution above reduces to the exponential distribution when~$\lambda = 0$, in accordance with the numerical results for the uniform
 reshuffling model in~\cite{dragulescu_yakovenko_2000}.
 Note also that the gamma distribution with shape parameter~$r = 2$, which approximates the limiting distribution of the immediate exchange model, is obtained by
 setting~$\lambda = 1/4$.


\section{Model description}
\label{sec:models}

\indent The models we study in this paper are discrete-state versions of the models~\eqref{eq:reshuffling}--\eqref{eq:saving} that also include a spatial
 structure in the form of local interactions.
\begin{itemize}
 \item Discrete-state versions means that we assume that there is a total of~$M$ coins in the system, where~$M$ is a nonnegative integer, and that individuals
       are characterized by the number of coins they possess, which we again assume to be a nonnegative integer.
       In particular, the fortune of each individual is a discrete quantity rather than a continuous one, and each exchange of money can only result in a
       finite number of outcomes. \vspace*{4pt}
 \item Local interactions, as opposed to global interactions where any two individuals in the system may interact at each time step, means that individuals
       are located on the set of vertices~$\V$ of a graph~$\G = (\V, \E)$ that we assume to be connected, and that only neighbors, i.e., individuals connected
       by an edge~$e \in \E$, can interact to exchange coins.
       The graph~$\G$ has to be thought of as representing a social network where only individuals connected by an edge (friends, business partners, etc.)
       can interact to exchange money.
\end{itemize}
 As previously, we let~$N = \card (\V)$ be the population size.
 Each of the three models is again a discrete-time Markov chain but the state at time~$t \in \N$ is now a spatial configuration
 $$ \xi : \V \to \N \quad \hbox{where} \quad \xi (x) = \hbox{number of coins at vertex~$x$}. $$
 In addition to the fact that the amount of money individuals possess is discrete rather than continuous, the main difference with the non-spatial models
 described in the previous section is that, at each time step, the interacting pair is not selected by choosing a pair uniformly at random but by choosing
 an edge~$e \in \E$ uniformly at random.
 Note that the non-spatial models in the previous section can be viewed as particular cases where~$\G$ is the complete graph with~$N$ vertices. \vspace*{5pt} \\
\noindent{\bf Uniform reshuffling model.}
 The version of the uniform reshuffling model~$(\unif_t)$ we consider evolves in discrete time as follows.
 At each time step, say~$t$, an edge~$(x, y)$ is chosen uniformly at random from the edge set~$\E$, which results in an interaction between the economical
 agents at vertex~$x$ and at vertex~$y$.
 Following~\cite{dragulescu_yakovenko_2000}, we assume that the total amount of coins both agents have at time~$t$ is uniformly redistributed
 between the two agents at time~$t + 1$.
 Since each coin is treated as an indivisible unit, the number of outcomes is finite.
 More precisely, we let
\begin{equation}
\label{eq:reshuffling-1}
  U = \uniform \{0, 1, \ldots, \unif_t (x) + \unif_t (y) \}
\end{equation}
 and update the configuration by setting
\begin{equation}
\label{eq:reshuffling-2}
  \unif_{t + 1} (x) = U \quad \hbox{and} \quad \unif_{t + 1} (y) = \unif_t (x) + \unif_t (y) - U
\end{equation}
 while~$\unif_{t + 1} \equiv \unif_t$ on the set~$\V - \{x, y \}$. Note that
 $$ \unif_t (x) + \unif_t (y) - U = \uniform \{0, 1, \ldots, \unif_t (x) + \unif_t (y) \} \quad \hbox{in distribution}, $$
 indicating that, though~\eqref{eq:reshuffling-2} is not symmetric in~$x$ and~$y$, the numbers of coins the agents at~$x$ and~$y$ receive from the
 interaction are indeed equal in distribution. \vspace*{5pt} \\
\noindent{\bf Immediate exchange model.}
 In our version of the immediate exchange model~$(\imex_t)$, we again choose an edge~$(x, y)$ uniformly at random at each time step, which results in
 an interaction between the two agents incident to the edge.
 Following~\cite{heinsalu_patriarca_2014}, we now assume that the two agents give a (uniform) random number of their coins to the other agent.
 More precisely, we let
\begin{equation}
\label{eq:exchange-1}
  U_1 = \uniform \{0, 1, \ldots, \imex_t (x) \} \quad \hbox{and} \quad U_2 = \uniform \{0, 1, \ldots, \imex_t (y) \}
\end{equation}
 be independent, and update the configuration by setting
\begin{equation}
\label{eq:exchange-2}
  \imex_{t + 1} (x) = \imex_t (x) - U_1 + U_2 \quad \hbox{and} \quad \imex_{t + 1} (y) = \imex_t (x) - U_2 + U_1
\end{equation}
 while~$\imex_{t + 1} \equiv \imex_t$ on the set~$\V - \{x, y \}$. \vspace*{5pt} \\
\noindent{\bf Uniform saving model.}
 As previously, an edge~$(x, y)$ is chosen uniformly at random at each time step, which results in an interaction between the two agents incident to the edge.
 In the original model with saving introduced in~\cite{chakraborti_chakrabarti_2000}, each agent saves a fixed (deterministic) fraction of their fortune
 and the combined remaining amount of money is uniformly redistributed between the two agents.
 In contrast, we add more randomness to the process by assuming that the number of coins each agent saves is also random.
 This results in a model~$(\save_t)$ that combines the previous two types of interactions: random exchange and uniform reshuffling.
 More precisely, we let
\begin{equation}
\label{eq:saving-1}
  U_1 = \uniform \{0, 1, \ldots, \save_t (x) \} \quad \hbox{and} \quad U_2 = \uniform \{0, 1, \ldots, \save_t (y) \}
\end{equation}
 be independent.
 These are the random numbers of coins vertex~$x$ and vertex~$y$ save before the exchange.
 Then, given that~$U_1 = c_x$ and~$U_2 = c_y$, we let
\begin{equation}
\label{eq:saving-2}
  U = \uniform \{0, 1, \ldots, \save_t (x) + \save_t (y) - c_x - c_y \}
\end{equation}
 be the random number of coins vertex~$x$ gets after uniform reshuffling.
 In particular, the new configuration is obtained by setting
\begin{equation}
\label{eq:saving-3}
  \save_{t + 1} (x) = c_x + U \quad \hbox{and} \quad \save_{t + 1} (y) = \save_t (x) + \save_t (y) - c_x - U
\end{equation}
 while~$\save_{t + 1} \equiv \save_t$ on the set~$\V - \{x, y \}$.
 Note that the number of coins at vertex~$y$ after the interaction can be written and interpreted in the following manner:
 $$ \save_{t + 1} (y) = \save_t (x) + \save_t (y) - c_x - U = c_y + (\save_t (x) + \save_t (y) - c_x - c_y - U) $$
 which is the number of coins vertex~$y$ saves before the interaction plus the number of coins vertex~$y$ gets after uniform reshuffling of the coins
 involved in the exchange.


\section{Main results}
\label{sec:results}

\indent Numerical simulations of the uniform reshuffling model~\eqref{eq:reshuffling-1}--\eqref{eq:reshuffling-2} on the complete graph suggest that
 the limiting distribution approaches the exponential distribution
 $$ \frac{1}{T} \ e^{- c/T} \quad \hbox{for all} \quad c = 0, 1, \ldots, M, $$
 shown in Figure~\ref{fig:unif} when the number of vertices and the money temperature are large.
 This is in agreement with the numerical results found for the continuous counterpart~\eqref{eq:reshuffling}.
 Similarly, numerical simulations of the immediate exchange model~\eqref{eq:exchange-1}--\eqref{eq:exchange-2} on the complete graph are in agreement
 with the numerical results found for the continuous counterpart~\eqref{eq:exchange}, suggesting again that the limiting distribution approaches in this
 case the gamma distribution
 $$ \frac{4c}{T^2} \ e^{- 2c/T} \quad \hbox{for all} \quad c = 0, 1, \ldots, M, $$
 shown in Figure~\ref{fig:imex} when the number of vertices and the money temperature are large.
 These results are expected since our versions of the uniform reshuffling and immediate exchange models are good approximations
 of models~\eqref{eq:reshuffling} and~\eqref{eq:exchange} when the number of coins is large.
 Now, in contrast with model~\eqref{eq:saving}, our version of the uniform saving model~\eqref{eq:saving-1}--\eqref{eq:saving-3} does not include any
 parameter measuring the saving propensity.
 As for the immediate exchange model, simulations of the uniform saving model suggest convergence to the gamma distribution with shape parameter two,
 which corresponds to the limit of model~\eqref{eq:saving} with saving propensity~$\lambda = 1/4$. \\
\begin{figure}[h!]
\centering
\includegraphics[width=0.90\textwidth]{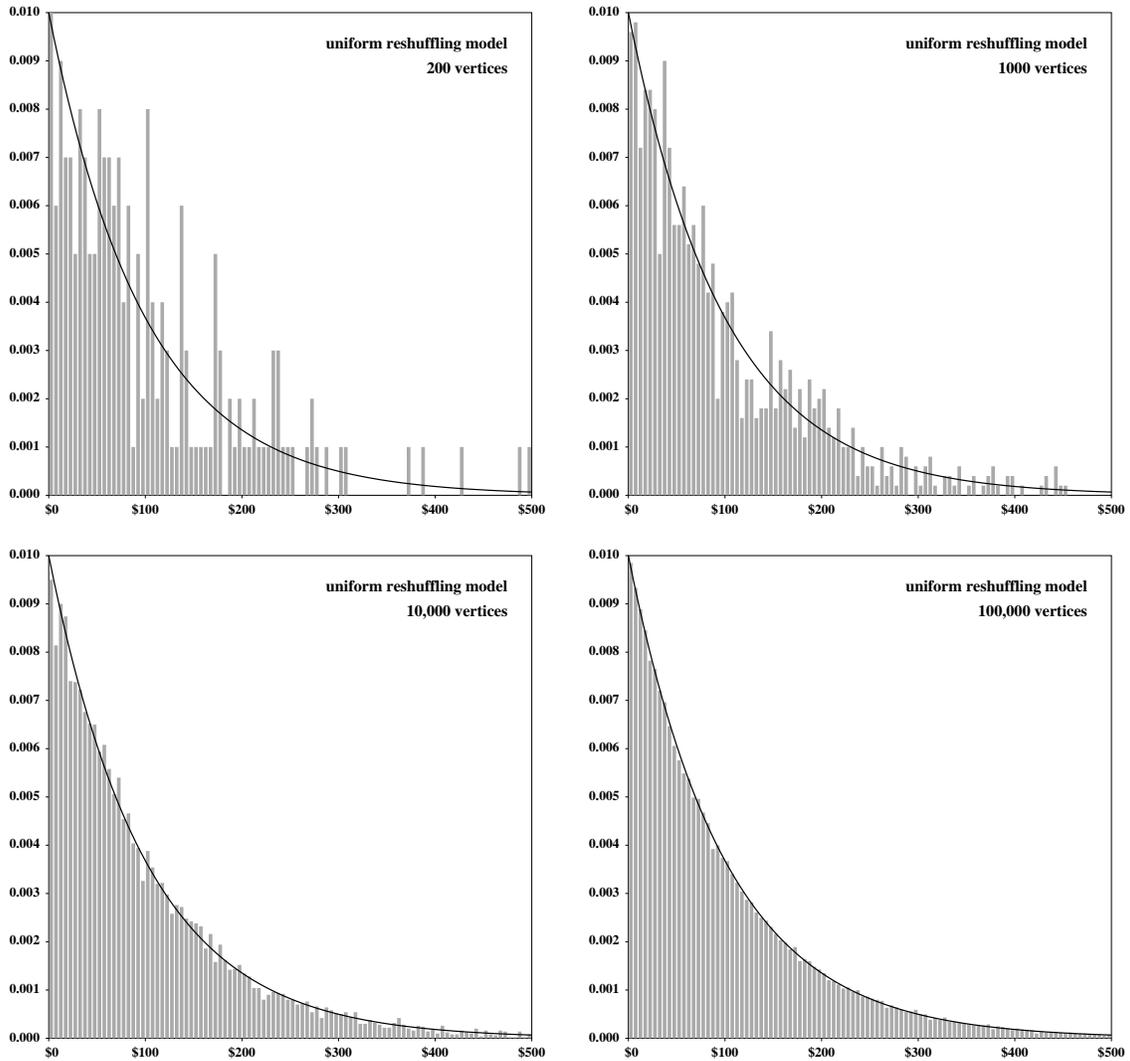}
 \caption{\upshape Simulation results for a single realization of the uniform reshuffling model~\eqref{eq:reshuffling-1}--\eqref{eq:reshuffling-2} on the complete graph.
  The number of vertices used in each simulation is indicated at the top right of the pictures.
  For each of the four simulations, all the vertices start with~$\$100$.
  The gray histograms represent the distribution of money after~$10^6$ updates while the black solid curve is the exponential distribution with mean~$T = 100$.}
\label{fig:unif}
\end{figure}
\begin{figure}[h!]
\centering
\includegraphics[width=0.90\textwidth]{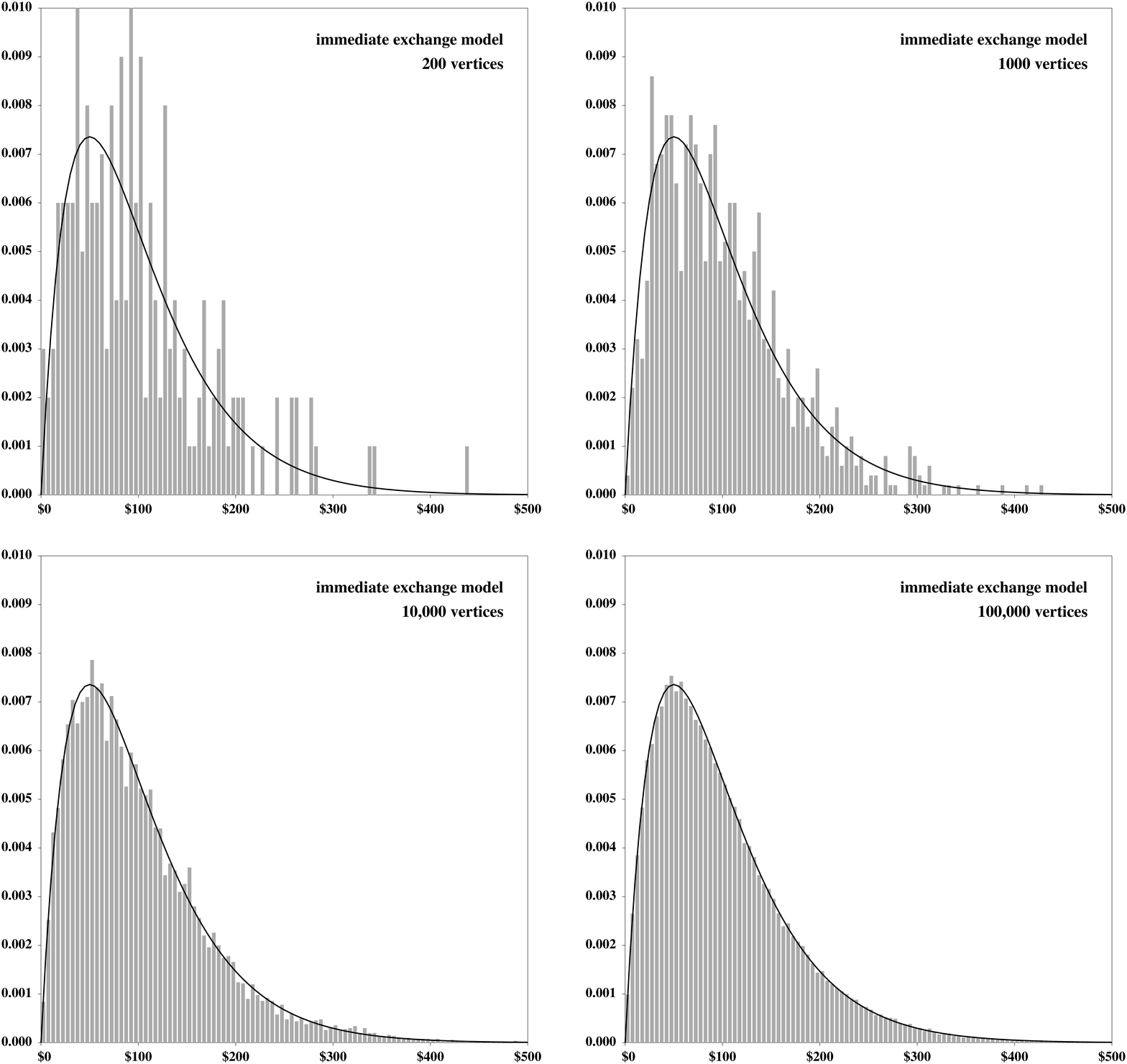}
 \caption{\upshape Simulation results for a single realization of the immediate exchange model~\eqref{eq:exchange-1}--\eqref{eq:exchange-2} on the complete graph.
  The number of vertices used in each simulation is indicated at the top right of the pictures.
  For each of the four simulations, all the vertices start with~$\$100$.
  The gray histograms represent the distribution of money after~$10^6$ updates while the black solid curve is the gamma distribution with mean~$T = 100$ and shape
  parameter two.}
\label{fig:imex}
\end{figure}
\indent Our analytical results for the three models~\eqref{eq:reshuffling-1}--\eqref{eq:saving-3} not only give rigorous proofs of the three conjectures
 above when the number of vertices and the money temperature are large, they also extend these conjectures in several directions:
\begin{enumerate}
 \item The convergence to a distribution of money that approaches the exponential distribution or the gamma distribution holds regardless of the initial
       configuration of the system while the numerical results in~\cite{dragulescu_yakovenko_2000, heinsalu_patriarca_2014, patriarca_chakraborti_kashi_2004}
       assume that each agent starts with~$T$ dollars. \vspace*{4pt}
 \item The convergence to a distribution of money that approaches the exponential distribution or the gamma distribution holds for the general spatial models
       on any connected graphs while the numerical results in~\cite{dragulescu_yakovenko_2000, heinsalu_patriarca_2014, patriarca_chakraborti_kashi_2004}
       focus on the complete graph only. \vspace*{4pt}
 \item The results in 1 and 2 appear as particular cases of more general results that give the exact expression of the distribution of money at equilibrium
       for all possible values of the population size and the money temperature while the conjectures
       in~\cite{dragulescu_yakovenko_2000, heinsalu_patriarca_2014, patriarca_chakraborti_kashi_2004} are only true under the assumption that these two
       quantities are large.
\end{enumerate}
 The level of generality of our results is a good illustration of the advantage of using mathematical tools as opposed to computer simulations that
 cannot be performed for all possible connected graphs with all possible number of vertices containing all possible number of coins starting from all
 possible initial configurations. \\
\indent We now state our results and briefly sketch their proofs.
 For all three models, there is a positive probability that an interaction between~$x$ and~$y$ results in the same number of coins moving
 from~$x \to y$ and from~$y \to x$ and therefore no change after the update.
 This shows that the processes are aperiodic.
 It can also be proved that the three processes are irreducible, which is an intrinsic consequence of the connectedness of the network of interactions.
 These two ingredients together with finiteness of the state space imply that, for each of the three models, there is a unique stationary distribution
 to which the process converges regardless of the initial configuration. \\
\indent For the model with uniform reshuffling, the symmetry of the evolution rules can be used to prove that the probability of a transition
 from~$\xi \to \eta$ is equal to the probability of a transition from~$\eta \to \xi$ for any two configurations~$\xi$ and~$\eta$.
 This implies that the process is doubly stochastic from which it follows that all the configurations are equally likely at equilibrium.
 This can be used to obtain an explicit expression of the distribution of money at equilibrium.
 Some basic algebra also implies that this distribution approaches the exponential distribution with mean~$T$ when~$N$ and~$T$ are both large, in
 agreement with the conjecture in~\cite{dragulescu_yakovenko_2000}.
\begin{theorem}[uniform reshuffling] --
\label{th:reshuffling}
 For all connected graph~$\G$ with~$N$ vertices, regardless of the number~$M$ of coins and the initial configuration,
 $$ \lim_{t \to \infty} P (\unif_t (x) = c) = {M - c + N - 2 \choose N - 2} \bigg/ {M + N - 1 \choose N - 1}. $$
 In particular, when~$N$ and~$T = M/N$ are large,
 $$ \lim_{t \to \infty} P (\unif_t (x) = c) \approx \frac{1}{T} \ e^{- c/T}. $$
\end{theorem}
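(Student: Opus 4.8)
The plan is to show that the uniform reshuffling chain is doubly stochastic, deduce that its (unique, by the irreducibility, aperiodicity, and finiteness already recorded) stationary distribution is uniform over the state space, and then extract the marginal law at a fixed vertex by elementary counting.

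First I would write the state space as $\Omega = \{\xi : \V \to \N : \sum_{z \in \V} \xi(z) = M \}$ and make the one-step transition probabilities explicit. Fix $\xi \neq \eta$ in $\Omega$, and let $A$ be the set of edges $(x,y) \in \E$ such that $\xi$ and $\eta$ agree on $\V - \{x,y\}$ and $\xi(x) + \xi(y) = \eta(x) + \eta(y)$. A transition $\xi \to \eta$ can occur only through the selection of an edge in $A$, and, conditionally on the edge $(x,y) \in A$ being chosen (probability $1/\card(\E)$), the rules (\ref{eq:reshuffling-1})--(\ref{eq:reshuffling-2}) assign the outcome $\unif_{t+1}(x) = \eta(x)$ probability $1/(\xi(x) + \xi(y) + 1)$. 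Hence
$$ P (\xi \to \eta) = \frac{1}{\card(\E)} \, \sum_{(x,y) \in A} \frac{1}{\xi(x) + \xi(y) + 1}. $$
The set $A$ is visibly unchanged under exchanging $\xi$ and $\eta$, and $\xi(x) + \xi(y) = \eta(x) + \eta(y)$ for every $(x,y) \in A$, so the same formula with $\eta$ in place of $\xi$ yields the identical value; the case $\xi = \eta$ is symmetric trivially. Therefore $P(\xi \to \eta) = P(\eta \to \xi)$ for all $\xi, \eta$, so $\sum_{\xi} P(\xi \to \eta) = \sum_{\xi} P(\eta \to \xi) = 1$: the chain is doubly stochastic and its stationary distribution is uniform on $\Omega$.

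Next I would count. A stars-and-bars argument gives $\card(\Omega) = \binom{M + N - 1}{N - 1}$, and, for a fixed vertex $x$ and $c \in \{0, 1, \ldots, M\}$, the configurations with $\xi(x) = c$ are in bijection with the distributions of the remaining $M - c$ coins among the other $N - 1$ vertices, so there are $\binom{M - c + N - 2}{N - 2}$ of them. Since $(\unif_t)$ converges to the uniform stationary distribution from any initial state, $\lim_{t \to \infty} P(\unif_t(x) = c)$ is the ratio of these two counts, which is the announced formula.

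Finally, for the asymptotics I would cancel factorials to rewrite this ratio as
$$ (N - 1) \, \frac{M (M - 1) \cdots (M - c + 1)}{(M + N - 1)(M + N - 2) \cdots (M + N - 1 - c)}, $$
a product of $c$ numerator factors over $c + 1$ denominator factors. Substituting $M = NT$, each numerator factor is $NT(1 + o(1))$ and each denominator factor is $NT(1 + 1/T)(1 + o(1))$ as $N \to \infty$, so the ratio is asymptotic to $\tfrac{1}{T}(1 + 1/T)^{-(c+1)}$, and $(1 + 1/T)^{-(c+1)} = \exp(-(c+1)\log(1 + 1/T)) \to e^{-c/T}$ when $T$ is large. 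The argument is essentially routine; the only point that needs care is the bookkeeping in the double-stochasticity step — being precise about which edges can realize a given transition and checking that the per-edge probabilities for the forward and reverse moves coincide — while a genuinely quantitative version of the exponential approximation would additionally require controlling the error terms uniformly over the relevant range of $c$.
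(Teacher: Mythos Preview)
Your proposal is correct and follows essentially the same route as the paper: symmetry of the transition matrix gives double stochasticity, hence the uniform stationary law on~$\C_{N,M}$, and stars-and-bars yields the claimed ratio. The only cosmetic difference is in the asymptotic bookkeeping---you cancel factorials so as to leave~$c$ numerator factors over~$c+1$ denominator factors and then send~$N\to\infty$ first, whereas the paper keeps~$N-2$ factors on each side and uses~$(1-c/(NT))^{N}\approx e^{-c/T}$; both are the same heuristic at the level of rigor the theorem claims.
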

\indent In contrast, the immediate exchange model is not doubly stochastic.
 However, one can use reversibility to have an implicit expression of the unique stationary distribution.
 Some combinatorial techniques lead to an explicit expression while some basic algebra implies that the distribution of money at equilibrium approaches
 the gamma distribution with mean~$T$ and shape parameter two when~$N$ and~$T$ are both large, in agreement with the conjecture
 in~\cite{heinsalu_patriarca_2014}.
\begin{theorem}[immediate exchange] --
\label{th:exchange}
 For all connected graph~$\G$ with~$N$ vertices, regardless of the number~$M$ of coins and the initial configuration,
 $$ \lim_{t \to \infty} P (\imex_t (x) = c) = (c + 1) {M - c + 2N - 3 \choose 2N - 3} \bigg/ {M + 2N - 1 \choose 2N - 1}. $$
 In particular, when~$N$ and~$T = M/N$ are large,
 $$ \lim_{t \to \infty} P (\imex_t (x) = c) \approx \frac{4c}{T^2} \ e^{- 2c/T}. $$
\end{theorem}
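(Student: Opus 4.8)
\section*{Proof proposal for Theorem~\ref{th:exchange}}

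The plan is to pin down the unique stationary distribution of the immediate exchange model $(\imex_t)$ by exhibiting an explicit reversible measure, and then to read off the one-vertex marginal with a short generating-function computation. Existence and uniqueness of the stationary distribution, and convergence to it from every initial configuration, have already been reduced (in the discussion preceding the theorem) to irreducibility, aperiodicity and finiteness of the state space $\Omega = \{\xi : \V \to \N : \textstyle\sum_{x} \xi(x) = M\}$, so it suffices to identify the limiting measure and compute its marginal.

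First I would compute the one-step transition probabilities along a fixed edge. Let $(x,y) \in \E$, and consider $\xi, \eta \in \Omega$ that agree off $\{x,y\}$, with $a = \xi(x)$, $b = \xi(y)$, $a' = \eta(x)$, $b' = \eta(y)$ and $a + b = a' + b'$. The update \eqref{eq:exchange-1}--\eqref{eq:exchange-2} carries $\xi$ to $\eta$ exactly when $U_2 - U_1 = a' - a$; counting the admissible pairs $(U_1, U_2) \in \{0, \dots, a\} \times \{0, \dots, b\}$ gives
$$ P (\xi \to \eta) = \frac{1}{\card (\E)} \cdot \frac{\min (a, a', b, b') + 1}{(a + 1)(b + 1)}, $$
and the crucial feature is that the numerator is \emph{symmetric} under $(a, b) \leftrightarrow (a', b')$. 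Consequently, for a product measure $\pi (\xi) \propto \prod_{x} w (\xi (x))$ the detailed balance equation $\pi (\xi) P (\xi \to \eta) = \pi (\eta) P (\eta \to \xi)$ reduces to the requirement that $w (a) w (b) / ((a + 1)(b + 1))$ depend on $\xi$ only through $a + b$; together with the constraint $\sum_x \xi (x) = M$ (which absorbs an irrelevant geometric factor), this forces $w (k) = k + 1$. Hence $(\imex_t)$ is reversible with respect to
$$ \pi (\xi) = \frac{1}{Z_N (M)} \ \prod_{x \in \V} (\xi (x) + 1), \qquad Z_N (M) = \sum_{\xi \in \Omega} \ \prod_{x \in \V} (\xi (x) + 1), $$
and by irreducibility this is the limiting distribution. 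Intuitively, $\prod_x (\xi (x) + 1)$ counts the ways to split each agent's coins into an ordered pair of sub-piles, so $\pi$ is the push-forward of the uniform distribution on configurations of $M$ indistinguishable coins over $2N$ boxes, which is what makes the binomial coefficients in the statement natural.

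Next I would evaluate $Z_N (M)$ and the marginal using the identity $\sum_{a \ge 0} (a + 1) z^a = (1 - z)^{-2}$. Raising to the $N$-th power gives $\sum_{M \ge 0} Z_N (M) \, z^M = (1 - z)^{-2N}$, so $Z_N (M) = \binom{M + 2N - 1}{2N - 1}$. Summing $\pi$ over all configurations with $\xi (x) = c$ factors out the term $(c + 1)$ at vertex $x$ and leaves $M - c$ coins to be distributed over the other $N - 1$ vertices, contributing $Z_{N - 1} (M - c) = \binom{M - c + 2N - 3}{2N - 3}$, whence
$$ \lim_{t \to \infty} P (\imex_t (x) = c) = (c + 1) \, \binom{M - c + 2N - 3}{2N - 3} \bigg/ \binom{M + 2N - 1}{2N - 1}. $$
For the asymptotic statement I would rewrite this ratio, after cancelling factorials, as $(c + 1)$ times $\frac{(2N - 1)(2N - 2)}{(M + 2N - 1)(M + 2N - 2)}$ times a product of $c$ ratios of the form $(M - j)/(M + 2N - 3 - j)$; with $T = M / N$ and $N, T$ large the prefactor is $\sim 4 / T^2$ and the product is $\sim (1 + 2/T)^{-c} \to e^{-2c/T}$, giving $(c + 1)(4 / T^2) \, e^{-2c/T} \approx (4c / T^2) \, e^{-2c/T}$.

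The main obstacle is the reversibility step: the whole argument rests on the symmetry of the combinatorial factor $\min (a, a', b, b') + 1$ appearing in the transition probability, after which the product weight $w (k) = k + 1$ is essentially forced and everything else is bookkeeping with binomial coefficients. The absence of double stochasticity, noted in the text, is precisely what makes this reversibility argument necessary rather than the uniform-measure shortcut that worked for the uniform reshuffling model.
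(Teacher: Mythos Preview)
Your proposal is correct and follows the same overall strategy as the paper: verify that $\mu(\xi)=\prod_{z}(\xi(z)+1)$ satisfies detailed balance, then compute the normalizer and the one-vertex marginal. Two of your sub-arguments are genuinely different from, and cleaner than, the paper's. First, you identify the edge-transition numerator directly as $\min(a,a',b,b')+1$, which is manifestly symmetric under $(a,b)\leftrightarrow(a',b')$; the paper instead writes it as $\min(\xi(x),\eta(y))+\min(\xi(x),\eta(x))-\xi(x)+1$ and needs a short chain of identities to exhibit the symmetry. Second, you compute the partition function via the generating-function identity $\sum_{a\ge 0}(a+1)z^a=(1-z)^{-2}$ (and your bijective remark that $\prod_x(\xi(x)+1)$ counts splittings into $2N$ sub-piles would give it in one line as well), whereas the paper proves it through a double induction: Lemma~\ref{lem:combinatorics} establishes $\sum_{c=0}^{M}(c+1)\binom{M-c+K}{K}=\binom{M+K+2}{K+2}$ by induction on $M+K$, and Lemma~\ref{lem:total-mass} then inducts on $N$. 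Your route is shorter; the paper's has the minor advantage that Lemma~\ref{lem:combinatorics} is reused verbatim when passing from $\Lambda(N,M)$ to the marginal. The asymptotic steps are equivalent. One small point: you defer irreducibility and aperiodicity to the informal discussion in Section~\ref{sec:results}, but the paper actually proves them (Lemma~\ref{lem:exchange-limit}); in a full write-up you would need that argument or an equivalent one.
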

\indent Turning to the uniform saving model, though its evolution rules are somewhat different from the evolution rules of the immediate exchange model,
 it can be proved that their respective stationary distributions satisfy the same detailed balance equation and therefore are equal.
 In particular, our previous theorem extends to the uniform saving model.
\begin{theorem}[uniform saving] --
\label{th:saving}
 For all connected graph~$\G$ with~$N$ vertices, regardless of the number~$M$ of coins and the initial configuration,
 $$ \lim_{t \to \infty} P (\save_t (x) = c) = (c + 1) {M - c + 2N - 3 \choose 2N - 3} \bigg/ {M + 2N - 1 \choose 2N - 1}. $$
 In particular, when~$N$ and~$T = M/N$ are large,
 $$ \lim_{t \to \infty} P (\save_t (x) = c) \approx \frac{4c}{T^2} \ e^{- 2c/T}. $$
\end{theorem}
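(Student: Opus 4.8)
The plan is to deduce Theorem~\ref{th:saving} from Theorem~\ref{th:exchange} by showing that the uniform saving chain~$(\save_t)$ and the immediate exchange chain~$(\imex_t)$ are reversible with respect to the \emph{same} measure, so that, both having a unique stationary distribution, they must share the same equilibrium law. As in the other two cases, the state space is the finite set of configurations~$\xi : \V \to \N$ with~$\sum_{x \in \V} \xi (x) = M$, a quantity preserved by the dynamics. The chain~$(\save_t)$ is aperiodic because the outcome~$U_1 = U_2 = 0$ followed by~$U = \save_t (x)$ has positive probability and leaves the configuration unchanged, and it is irreducible because at any edge~$(x, y)$ with~$\save_t (x) \geq 1$ the choice~$c_x = \save_t (x) - 1$,~$c_y = \save_t (y)$,~$U = 0$ moves exactly one coin from~$x$ to~$y$ with positive probability; connectedness of~$\G$ then lets the chain realize any transport of coins between configurations with the same total. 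By finiteness, this gives a unique stationary distribution to which~$(\save_t)$ converges from any initial configuration.

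The heart of the matter is a detailed balance computation. Fix an edge~$(x, y)$ and set~$a = \save_t (x)$,~$b = \save_t (y)$,~$s = a + b$. Tracking the constraints~$0 \leq U_1 \leq a$,~$0 \leq U_2 \leq b$ and~$0 \leq U \leq s - U_1 - U_2$, and noting that landing at~$\save_{t + 1} (x) = a'$ forces~$U = a' - c_x$ given~$U_1 = c_x$, one finds that the admissible pairs are exactly those with~$c_x \leq \min (a, a')$ and~$c_y \leq \min (b, s - a')$, so that the interaction across~$(x, y)$ sends~$(a, b)$ to~$(a', s - a')$ with probability
\[
  p \big( (a, b) \to (a', s - a') \big) = \frac{1}{(a + 1)(b + 1)} \ \sum_{c_x = 0}^{\min (a, a')} \ \sum_{c_y = 0}^{\min (b, \, s - a')} \frac{1}{s - c_x - c_y + 1}.
\]
Writing~$b = s - a$, both bounds~$\min (a, a')$ and~$\min (s - a, s - a')$ are invariant under~$a \leftrightarrow a'$, hence the double sum is symmetric under~$(a, b) \leftrightarrow (a', s - a')$. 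Now put~$\pi (\xi) = \prod_{z \in \V} (\xi (z) + 1)$. Two configurations~$\xi$ and~$\eta$ joined by a single interaction across~$(x, y)$ agree off~$\{x, y \}$, so~$\pi (\xi) = (a + 1)(b + 1) \, R$ and~$\pi (\eta) = (a' + 1)(s - a' + 1) \, R$ for a common factor~$R = \prod_{z \neq x, y} (\xi (z) + 1)$; these prefactors cancel those of~$p$, leaving only the symmetric double sum (and the common factor~$1 / \card (\E)$ coming from uniform edge selection) on both sides of the detailed balance equation
\[
  \pi (\xi) \, P (\save_{t + 1} = \eta \mid \save_t = \xi) = \pi (\eta) \, P (\save_{t + 1} = \xi \mid \save_t = \eta).
\]
Thus~$(\save_t)$ is reversible with respect to~$\pi$.

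Finally, the one-edge kernel of the immediate exchange model sends~$(a, b)$ to~$(a', s - a')$ with probability~$(\min \{a, b, a', s - a' \} + 1)/((a + 1)(b + 1))$, whose numerator depends only on the multiset~$\{a, b \} \cup \{a', s - a' \}$ and is therefore symmetric under the same exchange; the identical bookkeeping then shows that~$(\imex_t)$ is reversible with respect to the same~$\pi$. Since both chains have a unique stationary distribution, they have the same equilibrium law, so~$\lim_{t \to \infty} P (\save_t (x) = c)$ equals the expression in Theorem~\ref{th:exchange}, and the large-$N$, large-$T$ asymptotics~$\frac{4c}{T^2} \, e^{- 2 c / T}$ follow from the estimate used there. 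I expect the main obstacle to be the derivation of the double-sum formula for~$p$: one must unwind the nested uniform variables carefully enough to see that the admissible pairs~$(c_x, c_y)$ form exactly the rectangle~$\{0, \ldots, \min (a, a') \} \times \{0, \ldots, \min (b, s - a') \}$, after which the required symmetry, and hence the theorem, drops out immediately.
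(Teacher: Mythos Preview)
Your proposal is correct and follows essentially the same route as the paper: establish irreducibility and aperiodicity of~$(\save_t)$, derive the same double-sum transition formula, verify detailed balance with respect to~$\pi(\xi)=\prod_z(\xi(z)+1)$, and conclude that~$(\save_t)$ and~$(\imex_t)$ share the same stationary distribution so that Theorem~\ref{th:exchange} carries over. Your closed form~$(\min\{a,b,a',s-a'\}+1)/((a+1)(b+1))$ for the immediate-exchange kernel is a slightly tidier packaging of the paper's Lemma~\ref{lem:exchange-reversible} computation, but the substance is identical.
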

\indent The rest of this paper is devoted to proofs.
 Theorem~\ref{th:reshuffling} is proved in Section~\ref{sec:reshuffling}.
 Sections~\ref{sec:exchange} and~\ref{sec:saving} focus on the reversibility of the immediate exchange model and the uniform saving model, respectively,
 and give the corresponding detailed balance equations.
 Section~\ref{sec:exchange-saving} gives the common final step to complete the proof of Theorems~\ref{th:exchange} and~\ref{th:saving}.


\section{Proof of Theorem~\ref{th:reshuffling}}
\label{sec:reshuffling}

\indent This section is devoted to the proof of Theorem~\ref{th:reshuffling} about the limiting distribution of money for the uniform
 reshuffling model~\eqref{eq:reshuffling-1}--\eqref{eq:reshuffling-2}.
 The proof relies on the following two key ingredients:
\begin{enumerate}
 \item There exists a unique stationary distribution~$\pi_{\unif}$ to which the uniform reshuffling model converges starting from any initial configuration. \vspace*{4pt}
 \item The uniform distribution on the set of all possible configurations is stationary.
\end{enumerate}
 With these two preliminary results in hand, the theorem follows using some basic combinatorics and some basic algebra.
 From now on, we let
 $$ \begin{array}{l} \C_{N, M} = \{\xi : \V \to \N \ \hbox{such that} \ \sum_{x \in \V} \xi (x) = M \} \end{array} $$
 be the set of all possible configurations with exactly~$M$ coins.
 Also, for every vertex~$x \in \V$ and every configuration~$\xi : \V \to \N$, we let
 $$ \xi^x (z) = \xi (z) + \ind \{z = x \} \quad \hbox{for all} \quad z \in \V $$
 be the configuration obtained from~$\xi$ by adding one coin at vertex~$x$.
 We now prove existence and uniqueness of the stationary distribution.
\begin{lemma} --
\label{lem:reshuffling-limit}
 There is a unique stationary distribution~$\pi_{\unif}$ and
 $$ \lim_{t \to \infty} P_{\eta} (\unif_t = \xi) = \pi_{\unif} (\xi) \quad \hbox{for all} \quad \xi, \eta \in \C_{N, M}. $$
\end{lemma}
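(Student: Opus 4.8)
The plan is to establish Lemma~\ref{lem:reshuffling-limit} via the standard convergence theorem for finite Markov chains: a finite-state Markov chain that is irreducible and aperiodic has a unique stationary distribution $\pi_{\unif}$, and $P_\eta(\unif_t = \xi) \to \pi_{\unif}(\xi)$ for all $\xi, \eta$ in the state space. Since the state space here is the finite set $\C_{N,M}$ (it is finite because $M$ and $N$ are finite and each $\xi(x)$ is a nonnegative integer summing to $M$), it suffices to verify irreducibility and aperiodicity of the uniform reshuffling chain restricted to $\C_{N,M}$.

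Aperiodicity is the easy part and I would dispatch it first. Fix any configuration $\xi \in \C_{N,M}$ and any edge $(x,y) \in \E$. Conditioned on selecting that edge, the update~\eqref{eq:reshuffling-1}--\eqref{eq:reshuffling-2} sets $\unif_{t+1}(x) = U$ with $U = \uniform\{0, 1, \ldots, \xi(x) + \xi(y)\}$; in particular $U = \xi(x)$ with positive probability, in which case $\unif_{t+1} = \unif_t = \xi$. Since selecting a given edge has probability $1/\card(\E) > 0$, we get $P(\unif_{t+1} = \xi \mid \unif_t = \xi) > 0$, so every state has a self-loop and the chain is aperiodic.

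Irreducibility is the main obstacle, and the key input is connectedness of $\G$. The strategy is to show that from any $\xi \in \C_{N,M}$ the chain can reach, with positive probability in finitely many steps, the ``concentrated'' configuration $\delta_{x_0} \in \C_{N,M}$ that places all $M$ coins at a fixed vertex $x_0$; by symmetry (running the argument backward, or simply showing every state communicates with $\delta_{x_0}$) this yields that all states communicate. To move all coins to $x_0$: since $\G$ is connected, for each vertex $v$ there is a path from $v$ to $x_0$. Process the vertices so that, repeatedly, one picks an edge $(x,y)$ where $y$ is ``closer'' to $x_0$ (or is $x_0$ itself) and applies an update in which all of $x$'s coins are transferred to $y$ — this is the outcome $U = 0$ in~\eqref{eq:reshuffling-1}--\eqref{eq:reshuffling-2} giving $\unif_{t+1}(x) = 0$ and $\unif_{t+1}(y) = \unif_t(x) + \unif_t(y)$ — which occurs with positive probability. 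Iterating along shortest paths toward $x_0$, after finitely many such updates all $M$ coins sit at $x_0$, and each update had positive probability, so $P_\xi(\unif_t = \delta_{x_0}) > 0$ for some $t$. The only point requiring a little care is bookkeeping: one should pick an ordering of moves (e.g., sweep the BFS tree of $\G$ rooted at $x_0$ from the leaves inward) so that coins already delivered toward the root are never scattered back out.

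Having shown every $\xi \in \C_{N,M}$ communicates with $\delta_{x_0}$, the chain restricted to $\C_{N,M}$ is irreducible; combined with aperiodicity and finiteness, the convergence theorem gives the unique stationary distribution $\pi_{\unif}$ and the stated limit $\lim_{t\to\infty} P_\eta(\unif_t = \xi) = \pi_{\unif}(\xi)$ for all $\xi, \eta \in \C_{N,M}$. I expect the write-up to be short, with the bulk of the words spent on making the ``push all coins to $x_0$ along a spanning tree'' argument precise.
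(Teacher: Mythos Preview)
Your proposal is correct and follows the same overall scheme as the paper: reduce to the standard convergence theorem for finite Markov chains by verifying aperiodicity (via the self-loop $U=\xi(x)$, exactly as in the paper) and irreducibility (via connectedness of~$\G$). The only difference is cosmetic, in the irreducibility step: the paper moves a \emph{single} coin along a path in~$\G$ (showing $\xi^x\to\xi^y$ in $t<N$ steps for neighbors along a path, then inducts to move all $M$ coins), whereas you concentrate all coins at a fixed vertex~$x_0$ using the $U=0$ outcome along a spanning tree and then redistribute. Both arguments exploit the same elementary fact that any split of the combined pile at an edge is reachable with positive probability; neither is materially shorter or more general. Just be sure in the write-up to spell out the reverse direction $\delta_{x_0}\to\eta$ explicitly (e.g., push the correct subtree totals outward along the BFS tree), since ``running the argument backward'' via symmetry of the transition matrix would otherwise rely on Lemma~\ref{lem:reshuffling-uniform}, which has not yet been proved at this point.
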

\begin{proof}
 According to~\cite[Theorem~7.7]{lanchier_2017a}, it suffices to prove that the process is finite, irreducible and aperiodic.
 Finiteness is obvious while aperiodicity follows from the fact that
 $$ \begin{array}{rcl}
     P (\unif_{t + 1} = \xi \,| \,\unif_t = \xi) & \n = \n &
        \displaystyle \frac{1}{\card (\E)} \sum_{(x, y) \in \E} P (\uniform \{0, 1, \ldots, \xi (x) + \xi (y) \} = \xi (x)) \vspace*{4pt} \\ & \n = \n &
        \displaystyle \frac{1}{\card (\E)} \sum_{(x, y) \in \E} \bigg(\frac{1}{\xi (x) + \xi (y) + 1} \bigg) \geq
        \displaystyle \bigg(\frac{1}{M + 1} \bigg) > 0 \end{array} $$
 for every configuration~$\xi \in \C_{N, M}$.
 To prove that the process is also irreducible, let~$x, y \in \V$.
 Since the graph is connected, there exists a path
 $$ (x_0, x_1, \ldots, x_t) \subset \V \quad \hbox{such that} \quad x_0 = x, \ x_t = y \ \hbox{and} \ t < N. $$
 In particular, for all~$\xi \in \C_{N, M - 1}$,
\begin{equation}
\label{eq:reshuffling-limit-1}
  \begin{array}{l}
    \displaystyle P (\unif_t = \xi^y \,| \,\unif_0 = \xi^x) \geq
    \displaystyle \prod_{s = 0}^{t - 1} \ P (\unif_{s + 1} = \xi^{x_{s + 1}} \,| \,\unif_s = \xi^{x_s}) \vspace*{4pt} \\ \hspace*{40pt} =
    \displaystyle \prod_{s = 0}^{t - 1} \ \bigg(\frac{1}{\card (\E)} \bigg) \bigg(\frac{1}{\xi (x_s) + \xi (x_{s + 1}) + 2} \bigg) \geq
    \displaystyle \bigg(\frac{1}{N^2 \,(M + 1)} \bigg)^N > 0. \end{array}
\end{equation}
 Let~$m \leq M$.
 We deduce from~\eqref{eq:reshuffling-limit-1} by induction that, for all
 $$ \xi \in \C_{N, M - m} \quad \hbox{and} \quad x, y \in \V^m $$
 there exists~$t < mN$ such that
\begin{equation}
\label{eq:reshuffling-limit-2}
  P (\unif_t = (\cdots (\xi^{y_1})^{y_2} \cdots )^{y_m} \,| \,\unif_0 = (\cdots (\xi^{x_1})^{x_2} \cdots )^{x_m}) \geq
     \bigg(\frac{1}{N^2 \,(M + 1)} \bigg)^{mN}.
\end{equation}
 Since any~$\xi, \eta \in \C_{N, M}$ can be obtained from the configuration with zero coin by adding~$M$ coins at the appropriate vertices, it follows
 from~\eqref{eq:reshuffling-limit-2} with~$m = M$ that
 $$ P (\unif_t = \eta \,| \,\unif_0 = \xi) \geq \bigg(\frac{1}{N^2 \,(M + 1)} \bigg)^{MN} > 0 \quad \hbox{for some} \quad t < MN. $$
 This shows that the process is irreducible and completes the proof.
\end{proof} \\ \\
 The next lemma shows that the uniform reshuffling model is doubly stochastic, from which we deduce that the unique stationary distribution~$\pi_{\unif}$ from the previous
 lemma is the uniform distribution on the set of configurations~$\C_{N, M}$.
\begin{lemma} --
\label{lem:reshuffling-uniform}
 The unique stationary distribution is~$\pi_{\unif} = \uniform (\C_{N, M})$.
\end{lemma}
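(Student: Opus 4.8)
The plan is to show that the Markov chain $(\unif_t)$ is doubly stochastic on $\C_{N,M}$, i.e.\ that its transition matrix has columns summing to one, and then invoke Lemma~\ref{lem:reshuffling-limit} to conclude that the unique stationary distribution must be the uniform measure. Since the chain has a finite state space $\C_{N,M}$, double stochasticity is equivalent to the statement that the uniform distribution is stationary: if $\sum_{\xi} P(\xi \to \eta) = 1$ for every $\eta$, then $\pi_{\unif}(\eta) = \sum_{\xi} |\C_{N,M}|^{-1} P(\xi \to \eta) = |\C_{N,M}|^{-1}$, so $\uniform(\C_{N,M})$ is stationary, and by uniqueness it is $\pi_{\unif}$.

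The key step is therefore to verify double stochasticity, and the natural route is to prove the stronger symmetry property $P(\xi \to \eta) = P(\eta \to \xi)$ for all $\xi, \eta \in \C_{N,M}$; summing over $\xi$ then gives $\sum_\xi P(\xi \to \eta) = \sum_\xi P(\eta \to \xi) = 1$. To check the symmetry, I would condition on the choice of edge. Each edge $(x,y) \in \E$ is selected with probability $1/\card(\E)$, and the pair $(\xi, \eta)$ can differ through an update along $(x,y)$ only if $\xi \equiv \eta$ off $\{x,y\}$ and $\xi(x)+\xi(y) = \eta(x)+\eta(y) =: s$. Given this, the update~\eqref{eq:reshuffling-1}--\eqref{eq:reshuffling-2} sends $\xi$ to $\eta$ precisely when the uniform variable $U = \uniform\{0,1,\ldots,s\}$ takes the value $\eta(x)$, which has probability $1/(s+1)$; and it sends $\eta$ to $\xi$ when $U = \xi(x)$, again with probability $1/(s+1)$. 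Since the sum $s$ is the same in both directions, the two conditional probabilities agree, and summing the contributions over all edges $(x,y)$ gives $P(\xi \to \eta) = P(\eta \to \xi)$. (One minor point to handle cleanly: a given transition may be realizable through more than one edge only in the degenerate case $\xi = \eta$, where symmetry is trivial anyway, so there is no double-counting issue.)

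I expect the main obstacle to be purely bookkeeping rather than conceptual: one must be careful that the asymmetry of the update rule~\eqref{eq:reshuffling-2} in $x$ and $y$ does not break the argument. It does not, because the relevant quantity — the probability that $U$ hits the target value $\eta(x)$ — depends only on the total $s = \xi(x)+\xi(y)$, which is invariant under swapping the roles of the two endpoints and under interchanging $\xi$ and $\eta$. The remark in the model description that $\unif_t(x)+\unif_t(y)-U$ is itself $\uniform\{0,\ldots,s\}$ in distribution is exactly the statement that makes this work. Once symmetry is established, the conclusion $\pi_{\unif} = \uniform(\C_{N,M})$ is immediate from the finiteness of $\C_{N,M}$ and the uniqueness part of Lemma~\ref{lem:reshuffling-limit}.
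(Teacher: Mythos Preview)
Your proposal is correct and follows essentially the same route as the paper: both establish the symmetry $P(\xi\to\eta)=P(\eta\to\xi)$ by observing that a one-step transition along an edge $(x,y)$ has probability $1/(\card(\E)(s+1))$ depending only on the conserved sum $s=\xi(x)+\xi(y)=\eta(x)+\eta(y)$, deduce double stochasticity, and then invoke the uniqueness from Lemma~\ref{lem:reshuffling-limit}. Your treatment of the edge double-counting issue for $\xi=\eta$ is, if anything, slightly more explicit than the paper's.
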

\begin{proof}
 Let~$\xi, \eta \in \C_{N, M}$.
 Note that~$P (\unif_{t + 1} = \eta \,| \,\unif_t = \xi) > 0$ if and only if
 $$ \xi \equiv \eta \ \hbox{on} \ \V - \{x, y \} \quad \hbox{and} \quad \xi (x) + \xi (y) = \eta (x) + \eta (y) $$
 for some~$(x, y) \in \E$, in which case we have
 $$ \begin{array}{rcl}
      P (\unif_{t + 1} = \eta \,| \,\unif_t = \xi) & \n = \n &
         \displaystyle \frac{1}{\card (\E)} \ P (\uniform \{0, 1, \ldots, \xi (x) + \xi (y) \} = \eta (x)) \vspace*{4pt} \\ & \n = \n &
         \displaystyle \frac{1}{\card (\E)} \ \frac{1}{\xi (x) + \xi (y) + 1}. \end{array} $$
 In particular, either~$P (\unif_{t + 1} = \eta \,| \,\unif_t = \xi) = P (\unif_{t + 1} = \xi \,| \,\unif_t = \eta) = 0$ or
 $$ \begin{array}{rcl}
      P (\unif_{t + 1} = \eta \,| \,\unif_t = \xi) & \n = \n &
         \displaystyle \frac{1}{\card (\E)} \ \frac{1}{\xi (x) + \xi (y) + 1} \vspace*{4pt} \\ & \n = \n &
         \displaystyle \frac{1}{\card (\E)} \ \frac{1}{\eta (x) + \eta (y) + 1} = P (\unif_{t + 1} = \xi \,| \,\unif_t = \eta). \end{array} $$
 This shows that the transition matrix of the process is symmetric and so doubly stochastic.
 Therefore, it follows from \cite[Section~7.3]{lanchier_2017a} that the uniform distribution on the set of configurations is stationary.
 By the uniqueness of the stationary distribution~$\pi_{\unif}$ established in the previous lemma, we conclude that~$\pi_{\unif} = \uniform (\C_{N, M})$.
\end{proof} \\ \\
 With Lemmas~\ref{lem:reshuffling-limit} and~\ref{lem:reshuffling-uniform} in hand, we are now ready to prove the theorem. \\ \\
\begin{proofof}{Theorem~\ref{th:reshuffling}}
 This is similar to the proofs of Lemma~4 and Theorem~1 in~\cite{lanchier_2017b} that we briefly recall.
 First, we note that
 $$ \card (\C_{N, M}) = \card \,\{c \in \N^N : c_1 + \cdots + c_N = M \} = {M + N - 1 \choose N - 1}. $$
 Since in addition all the configurations are equally likely under~$\pi_{\unif}$ according to Lemma~\ref{lem:reshuffling-uniform}, and since there
 are~$\card (\C_{N - 1, M - c})$ configurations with exactly~$c$ coins at vertex~$x$,
 $$ \lim_{t \to \infty} P (\unif_t (x) = c) = \frac{\card (\C_{N - 1, M - c})}{\card (\C_{N, M})} = {M - c + N - 2 \choose N - 2} \bigg/ {M + N - 1 \choose N - 1}. $$
 This shows the first part of the theorem.
 In addition, when~$N$ and~$T$ are large,
 $$ \begin{array}{rcl}
    \displaystyle \lim_{t \to \infty} P (\unif_t (x) = c) & \n = \n &
    \displaystyle \frac{(M - c + N - 2) \cdots (M - c + 1)}{(M + N - 1) \cdots (M + 1)} \ \frac{(N - 1)!}{(N - 2)!} \vspace*{8pt} \\ & \n = \n &
    \displaystyle \frac{(M - c + N - 2) \cdots (M - c + 1)}{(M + N - 2) \cdots (M + 1)} \ \frac{(N - 1)}{(M + N - 1)} \vspace*{8pt} \\ & \n \approx \n &
    \displaystyle \bigg(\frac{N}{NT} \bigg) \bigg(1 - \frac{c}{NT} \bigg)^N \approx \frac{1}{T} \ e^{- c/T}. \end{array} $$
 This shows the second part of the theorem.
\end{proofof}


\section{Reversibility of the immediate exchange model}
\label{sec:exchange}

\indent This section collects preliminary results about the immediate exchange model~\eqref{eq:exchange-1}--\eqref{eq:exchange-2} that will be useful to
 prove Theorem~\ref{th:exchange}.
 As for the uniform reshuffling model, the first step is to show that there exists a unique stationary distribution~$\pi_{\imex}$ to which the immediate exchange model
 converges starting from any initial configuration.
 Contrary to the uniform reshuffling model, the process is not doubly stochastic and so the uniform distribution is no longer stationary.
 However, an implicit expression of the (unique) stationary distribution can be found using reversibility.
\begin{lemma} --
\label{lem:exchange-limit}
 There is a unique stationary distribution~$\pi_{\imex}$ and
 $$ \lim_{t \to \infty} P_{\eta} (\imex_t = \xi) = \pi_{\imex} (\xi) \quad \hbox{for all} \quad \xi, \eta \in \C_{N, M}. $$
\end{lemma}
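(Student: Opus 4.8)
The plan is to mirror the proof of Lemma~\ref{lem:reshuffling-limit}: by \cite[Theorem~7.7]{lanchier_2017a}, it suffices to check that the immediate exchange model, viewed as a Markov chain on the finite set~$\C_{N, M}$, is irreducible and aperiodic, and the statement (existence and uniqueness of~$\pi_{\imex}$ together with convergence from any initial configuration) follows at once. Finiteness being obvious, the two things to prove are aperiodicity and irreducibility.

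For aperiodicity, I would note that, whichever edge~$(x, y) \in \E$ is selected, the outcome~$U_1 = U_2 = 0$ (each agent keeps all of its coins) leaves the configuration unchanged, and this outcome has probability~$1 / ((\imex_t (x) + 1)(\imex_t (y) + 1)) \geq 1 / (M + 1)^2$. Hence
$$ P (\imex_{t + 1} = \xi \,| \,\imex_t = \xi) \geq \frac{1}{(M + 1)^2} > 0 \quad \hbox{for all} \quad \xi \in \C_{N, M}, $$
so the chain is aperiodic.

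For irreducibility, the elementary step is that a single coin can be pushed across an edge with positive probability: if~$x$ and~$y$ are neighbors and~$\xi \in \C_{N, M - 1}$, then selecting the edge~$(x, y)$ and the (admissible, since~$\imex_t (x) = \xi (x) + 1 \geq 1$) values~$U_1 = 1$ and~$U_2 = 0$ turns~$\xi^x$ into~$\xi^y$, so that
$$ P (\imex_{t + 1} = \xi^y \,| \,\imex_t = \xi^x) \geq \frac{1}{\card (\E)} \cdot \frac{1}{\xi (x) + 2} \cdot \frac{1}{\xi (y) + 1} \geq \frac{1}{N^2 \,(M + 1)^2} > 0. $$
From this point the argument is the one already used in Section~\ref{sec:reshuffling}: connectedness of~$\G$ lets one chain these single-coin moves along a path of length~$< N$ to transfer a coin between any two vertices; iterating, for every~$\xi \in \C_{N, M - m}$ and all~$x, y \in \V^m$ there is~$t < mN$ with~$P (\imex_t = (\cdots (\xi^{y_1}) \cdots)^{y_m} \,| \,\imex_0 = (\cdots (\xi^{x_1}) \cdots)^{x_m}) > 0$; and taking~$m = M$ with~$\xi$ the empty configuration shows that any configuration in~$\C_{N, M}$ is reachable from any other in finitely many steps.

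I do not expect a real obstacle here: the only model-specific point is the single-edge coin push, where one now chooses the pair~$(U_1, U_2)$ rather than the single variable~$U$ of the reshuffling model, but the choice~$U_1 = 1$, $U_2 = 0$ handles it cleanly, and the remaining induction and counting are identical in form to those in the proof of Lemma~\ref{lem:reshuffling-limit}.
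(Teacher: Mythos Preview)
Your proposal is correct and follows essentially the same approach as the paper: verify finiteness, aperiodicity, and irreducibility, then invoke \cite[Theorem~7.7]{lanchier_2017a}, with the irreducibility step reduced to pushing a single coin along an edge and then iterating exactly as in Lemma~\ref{lem:reshuffling-limit}. The only cosmetic difference is that the paper bounds the self-loop and single-coin-move probabilities via the full events~$U_1 = U_2$ and~$U_1 = U_2 + 1$ (obtaining~$1/(M+1)$ and~$1/(N^2(2M+2))$), whereas you use the particular outcomes~$U_1 = U_2 = 0$ and~$(U_1, U_2) = (1, 0)$, which yields slightly weaker but perfectly adequate bounds.
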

\begin{proof}
 As for the uniform reshuffling model, it suffices to establish finiteness, irreducibility and aperiodicity.
 Finiteness is again obvious.
 Letting
 $$ U (z, \xi) = \uniform \{0, 1, \ldots, \xi (z) \} \quad \hbox{for all} \quad z \in \V $$
 be independent, aperiodicity follows from the fact that
 $$ \begin{array}{rcl}
     P (\imex_{t + 1} = \xi \,| \,\imex_t = \xi) & \n = \n &
        \displaystyle \frac{1}{\card (\E)} \sum_{(x, y) \in \E} P (U (x, \xi) = U (y, \xi)) \vspace*{4pt} \\ & \n = \n &
        \displaystyle \frac{1}{\card (\E)} \sum_{(x, y) \in \E} \frac{\min (\xi (x) + 1, \xi (y) + 1)}{(\xi (x) + 1)(\xi (y) + 1)} \geq
        \displaystyle \bigg(\frac{1}{M + 1} \bigg) > 0 \end{array} $$
 for every~$\xi \in \C_{N, M}$.
 Also, letting~$(x, y) \in \E$ and~$\xi \in \C_{N, M - 1}$,
 $$ \begin{array}{rcl}
      P (\imex_{t + 1} = \xi^y \,| \,\imex_t = \xi^x) & \n = \n &
         \displaystyle \frac{P (\uniform \{0, 1, \ldots, \xi (x) + 1 \} = U (y, \xi) + 1)}{\card (\E)} \vspace*{4pt} \\ & \n = \n &
         \displaystyle \frac{1}{\card (\E)} \ \frac{\min (\xi (x) + 1, \xi (y) + 1)}{(\xi (x) + 2)(\xi (y) + 1)} \geq
         \displaystyle \frac{1}{N^2 \,(2M + 2)} > 0. \end{array} $$
 Repeating the proof of Lemma~\ref{lem:reshuffling-limit}, we deduce that, for all~$\xi, \eta \in \C_{N, M}$,
 $$ P (\imex_t = \eta \,| \,\imex_0 = \xi) = \bigg(\frac{1}{N^2 \,(2M + 2)} \bigg)^{MN} > 0 \quad \hbox{for some} \quad t < MN, $$
 which shows irreducibility.
\end{proof} \\ \\
 We now give an implicit expression of~$\pi_{\imex}$ using reversibility.
\begin{lemma} --
\label{lem:exchange-reversible}
 The distribution~$\pi_{\imex}$ is reversible and
\begin{equation}
\label{eq:exchange-reversible-0}
  \pi_{\imex} (\xi) = \frac{\mu (\xi)}{\displaystyle \sum_{\eta \in \C_{N, M}} \mu (\eta)} \quad \hbox{where} \quad \mu (\xi) = \prod_{z \in \V} \,(\xi (z) + 1).
\end{equation}
\end{lemma}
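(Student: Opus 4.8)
The plan is to prove the lemma by checking that the weights $\mu(\xi) = \prod_{z \in \V}(\xi(z)+1)$ satisfy the detailed balance equations for the immediate exchange model, namely that
$$ \mu(\xi)\,P(\imex_{t+1} = \eta \mid \imex_t = \xi) = \mu(\eta)\,P(\imex_{t+1} = \xi \mid \imex_t = \eta) \qquad \text{for all } \xi, \eta \in \C_{N,M}. $$
Since $\C_{N,M}$ is finite, $\mu$ is summable, so the measure in~\eqref{eq:exchange-reversible-0} is a well-defined probability measure; the detailed balance equations then say precisely that this measure is reversible, hence stationary, and the uniqueness half of Lemma~\ref{lem:exchange-limit} identifies it with $\pi_{\imex}$. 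This delivers both assertions of the lemma at once, so the entire task reduces to verifying the displayed equations.

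To that end I would first unpack the transitions. Fix $\xi \neq \eta$ in $\C_{N,M}$. Because an interaction across the selected edge conserves the total number of coins held by the two endpoints of that edge, $P(\imex_{t+1} = \eta \mid \imex_t = \xi) > 0$ forces the existence of an edge $(x,y) \in \E$ with $\xi \equiv \eta$ on $\V - \{x,y\}$ and $\xi(x) + \xi(y) = \eta(x) + \eta(y)$; moreover such an edge is unique, since the set of vertices on which $\xi$ and $\eta$ disagree is nonempty, contained in $\{x,y\}$, and cannot reduce to a single vertex (conservation across the edge would then be violated), so it equals $\{x,y\}$. Writing $k = \xi(x) - \eta(x) = \eta(y) - \xi(y)$ and recalling that the interaction moves $U_1 = \uniform\{0, \ldots, \xi(x)\}$ coins from $x$ to $y$ and, independently, $U_2 = \uniform\{0, \ldots, \xi(y)\}$ coins from $y$ to $x$, one gets
$$ P(\imex_{t+1} = \eta \mid \imex_t = \xi) = \frac{1}{\card(\E)}\,P(U_1 - U_2 = k) = \frac{1}{\card(\E)}\,\frac{C(\xi,\eta)}{(\xi(x)+1)(\xi(y)+1)}, $$
where $C(\xi,\eta) = \card\{(u_1,u_2) \in \N^2 : u_1 \le \xi(x),\ u_2 \le \xi(y),\ u_1 - u_2 = k\}$. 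Since $\mu(\xi) = (\xi(x)+1)(\xi(y)+1)\prod_{z \notin \{x,y\}}(\xi(z)+1)$, multiplying the last display by $\mu(\xi)$ cancels the denominator exactly, and the leftover product over $z \notin \{x,y\}$ is unchanged when $\xi$ is replaced by $\eta$. Hence detailed balance for this pair is equivalent to the combinatorial identity $C(\xi,\eta) = C(\eta,\xi)$, the case $\xi = \eta$ being trivial.

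The only real content is that identity, and it has a one-line bijective proof: the swap $(u_1,u_2) \mapsto (u_2,u_1)$ carries the set counted by $C(\xi,\eta)$ onto the set counted by $C(\eta,\xi)$. Indeed, if $u_1 - u_2 = k$ with $u_1 \le \xi(x)$ and $u_2 \le \xi(y)$, then $u_2 - u_1 = -k = \eta(x) - \xi(x)$, while $u_2 = u_1 - k \le \xi(x) - k = \eta(x)$ and $u_1 = u_2 + k \le \xi(y) + k = \eta(y)$, which are exactly the constraints defining $C(\eta,\xi)$; the inverse map is the same swap. This completes detailed balance and hence the lemma. I do not expect any genuine obstacle here: the bijection is immediate, and the only points demanding a little care are the bookkeeping ones — checking that for $\xi \neq \eta$ the relevant edge is unique so that $P(\imex_{t+1} = \eta \mid \imex_t = \xi)$ contains at most one nonzero summand, and tracking the factor $(\xi(x)+1)(\xi(y)+1)$, which is precisely the normalizing constant of the two independent uniform variables and is what forces the choice $\mu(z) = \xi(z)+1$.
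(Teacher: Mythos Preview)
Your proof is correct and follows essentially the same strategy as the paper: both verify detailed balance by showing that $(\xi(x)+1)(\xi(y)+1)\,P(\imex_{t+1}=\eta\mid\imex_t=\xi)$ is symmetric in $\xi$ and $\eta$. The only difference is in how that symmetry is established---the paper computes the count explicitly as $\min(\xi(x),\eta(y))+\min(\xi(x),\eta(x))-\xi(x)+1$ and manipulates it algebraically, whereas your swap bijection $(u_1,u_2)\mapsto(u_2,u_1)$ gives the same conclusion more directly without ever evaluating the count.
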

\begin{proof}
 Let~$\xi \neq \eta$ in~$\C_{N, M}$ and assume that, for some~$(x, y) \in \E$,
\begin{equation}
\label{eq:exchange-reversible-1}
  \xi \equiv \eta \ \hbox{on} \ \V - \{x, y \} \quad \hbox{and} \quad \xi (x) + \xi (y) = \eta (x) + \eta (y).
\end{equation}
 Letting~$U (z, \xi) = \uniform \{0, 1, \ldots, \xi (z) \}$ be independent, we have
 $$ \begin{array}{rcl}
      P (\imex_{t + 1} = \eta \,| \,\imex_t = \xi) & \n = \n &
         \displaystyle \frac{P (\xi (x) + U (y, \xi) - U (x, \xi) = \eta (x))}{\card (\E)} \vspace*{4pt} \\ & \n = \n &
         \displaystyle \frac{1}{\card (\E)} \ \sum_{c_x = 0}^{\xi (x)} \ \sum_{c_y = 0}^{\xi (y)} \ \frac{\ind \{c_x = \xi (x) - \eta (x) + c_y \}}{(\xi (x) + 1)(\xi (y) + 1)} \vspace*{4pt} \\ & \n = \n &
         \displaystyle \frac{1}{\card (\E)} \ \sum_{c_x = 0}^{\xi (x)} \ \frac{\ind \{\xi (x) - \eta (x) \leq c_x \leq \xi (x) - \eta (x) + \xi (y) \}}{(\xi (x) + 1)(\xi (y) + 1)}. \end{array} $$
 Since~$\xi (x) - \eta (x) + \xi (y) = \eta (y)$, we get
 $$ \begin{array}{rcl}
      Q_{x,y} (\xi, \eta) & \n = \n & \card (\E) \,(\xi (x) + 1)(\xi (y) + 1) \,P (\imex_{t + 1} = \eta \,| \,\imex_t = \xi) \vspace*{4pt} \\ & \n = \n &
    \min (\xi (x), \xi (x) - \eta (x) + \xi (y)) - \max (0, \xi (x) - \eta (x)) + 1 \vspace*{4pt} \\ & \n = \n &
    \min (\xi (x), \eta (y)) + \min (\xi (x), \eta (x)) - \xi (x) + 1. \end{array} $$
 Using also that~$\eta (y) - \xi (x) = \xi (y) - \eta (x)$,
 $$ \begin{array}{rcl}
      Q_{x,y} (\xi, \eta) & \n = \n & \min (\xi (x), \eta (y)) + \min (\xi (x), \eta (x)) - \xi (x) + 1 \vspace*{4pt} \\
                          & \n = \n & \min (0, \eta (y) - \xi (x)) + \min (\xi (x), \eta (x)) + 1 \vspace*{4pt} \\
                          & \n = \n & \min (0, \xi (y) - \eta (x)) + \min (\xi (x), \eta (x)) + 1 \vspace*{4pt} \\
                          & \n = \n & \min (\eta (x), \xi (y)) + \min (\eta (x), \xi (x)) - \eta (x) + 1 = Q_{x,y} (\eta, \xi) \end{array} $$
 from which it follows that
\begin{equation}
\label{eq:exchange-reversible-2}
  \begin{array}{l}
    (\xi (x) + 1)(\xi (y) + 1) \,P (\imex_{t + 1} = \eta \,| \,\imex_t = \xi) \vspace*{4pt} \\ \hspace*{80pt} =
    (\eta (x) + 1)(\eta (y) + 1) \,P (\imex_{t + 1} = \xi \,| \,\imex_t = \eta). \end{array}
\end{equation}
 If on the contrary condition~\eqref{eq:exchange-reversible-1} is not satisfied, since there are only two neighbors exchanging money at each time step, we must have
\begin{equation}
\label{eq:exchange-reversible-3}
  P (\imex_{t + 1} = \eta \,| \,\imex_t = \xi) = P (\imex_{t + 1} = \xi \,| \,\imex_t = \eta) = 0.
\end{equation}
 Combining~\eqref{eq:exchange-reversible-2} and~\eqref{eq:exchange-reversible-3}, we conclude that, in any case,
 $$ \mu (\xi) \,P (\imex_{t + 1} = \eta \,| \,\imex_t = \xi) = \mu (\eta) \,P (\imex_{t + 1} = \xi \,| \,\imex_t = \eta) \quad \hbox{where} \quad
    \mu (\xi) = \prod_{z \in \V} \,(\xi (z) + 1). $$
 By uniqueness, this implies that~$\pi_{\imex}$ is reversible and satisfies~\eqref{eq:exchange-reversible-0}.
\end{proof}


\section{Reversibility of the uniform saving model}
\label{sec:saving}

\indent The objective of this section is to prove that Lemmas~\ref{lem:exchange-limit} and~\ref{lem:exchange-reversible} in the previous section
 also hold for the uniform saving model~\eqref{eq:saving-1}--\eqref{eq:saving-3}.
 The main ideas behind the proofs are the same as for the immediate exchange model but the technical details are somewhat different.
\begin{lemma} --
\label{lem:saving-limit}
 There is a unique stationary distribution~$\pi_{\save}$ and
 $$ \lim_{t \to \infty} P_{\eta} (\save_t = \xi) = \pi_{\save} (\xi) \quad \hbox{for all} \quad \xi, \eta \in \C_{N, M}. $$
\end{lemma}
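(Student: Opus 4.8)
The plan is to follow the same template already used for Lemmas~\ref{lem:reshuffling-limit} and~\ref{lem:exchange-limit}: by \cite[Theorem~7.7]{lanchier_2017a} it is enough to check that the uniform saving model, viewed as a Markov chain on the finite set~$\C_{N, M}$, is finite, aperiodic and irreducible. Finiteness is again obvious. For aperiodicity, I would produce a self-loop at every~$\xi \in \C_{N, M}$ by having both interacting agents save all of their coins: on the event~$\{U_1 = \save_t (x), \, U_2 = \save_t (y)\}$ we have~$\save_t (x) + \save_t (y) - c_x - c_y = 0$, so~$U = 0$ with probability one and, by~\eqref{eq:saving-3}, the configuration is unchanged. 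Hence
$$ P (\save_{t + 1} = \xi \mid \save_t = \xi) \geq \frac{1}{\card (\E)} \sum_{(x, y) \in \E} \frac{1}{(\xi (x) + 1)(\xi (y) + 1)} \geq \frac{1}{(M + 1)^2} > 0 $$
for every~$\xi \in \C_{N, M}$, which gives aperiodicity.

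The substance of the proof is irreducibility, and the argument of Lemma~\ref{lem:reshuffling-limit} transfers once we show that a single coin can be pushed across any edge with positive probability. Fix~$(x, y) \in \E$ and~$\xi \in \C_{N, M - 1}$, so that~$\save_t (x) = \xi (x) + 1$ and~$\save_t (y) = \xi (y)$ in the configuration~$\xi^x$. On the event~$\{U_1 = 0, \, U_2 = 0\}$ the interaction~\eqref{eq:saving-2}--\eqref{eq:saving-3} collapses to a pure uniform reshuffling of the~$\xi (x) + \xi (y) + 1$ coins carried by~$x$ and~$y$, and~$\save_{t + 1} (x) = U$ equals~$\xi (x)$ with probability~$1 / (\xi (x) + \xi (y) + 2)$; since the total number of coins at~$x$ and~$y$ is conserved, this forces~$\save_{t + 1} (y) = \xi (y) + 1$, i.e., the new configuration is exactly~$\xi^y$. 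Therefore
$$ P (\save_{t + 1} = \xi^y \mid \save_t = \xi^x) \geq \frac{1}{\card (\E)} \cdot \frac{1}{(\xi (x) + 2)(\xi (y) + 1)} \cdot \frac{1}{\xi (x) + \xi (y) + 2} \geq \frac{1}{N^2 \,(M + 2)^3} > 0. $$

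From here the proof is a verbatim repetition of the end of Lemma~\ref{lem:reshuffling-limit}: connectedness of~$\G$ lets us relay a coin from any vertex to any other along a path of length less than~$N$, and an induction on the number of coins shows that any~$\xi \in \C_{N, M}$ can be turned into any~$\eta \in \C_{N, M}$ in fewer than~$MN$ steps with probability at least~$(N^2 (M + 2)^3)^{-MN} > 0$, which is irreducibility. I do not expect a genuine obstacle here; the only point that differs from the immediate exchange model is that the two-stage ``save, then reshuffle'' dynamics of~\eqref{eq:saving-1}--\eqref{eq:saving-3} does not visibly generate the elementary one-coin moves, but conditioning on~$U_1 = U_2 = 0$ reduces it to the reshuffling model and makes the estimate transparent.
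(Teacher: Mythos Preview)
Your proposal is correct and follows essentially the same template as the paper's proof: finiteness, aperiodicity via the event that both agents save all their coins (identical to the paper), and irreducibility via the one-coin-move argument of Lemma~\ref{lem:reshuffling-limit}. The only difference is cosmetic: for the elementary move~$\xi^x \to \xi^y$ the paper conditions on the event that~$x$ saves~$\xi(x)$ coins and~$y$ saves all~$\xi(y)$ coins (leaving a single coin to reshuffle, with~$U_1 = 0$ sending it to~$y$), yielding the bound~$\bigl(N(M+2)\bigr)^{-2}$, whereas you condition on~$U_1 = U_2 = 0$ to collapse the dynamics to pure reshuffling, yielding the slightly weaker bound~$\bigl(N^2(M+2)^3\bigr)^{-1}$; either event works and the rest of the argument is identical.
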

\begin{proof}
 Let~$\xi \in \C_{N, M}$ and let
 $$ U (z, \xi) = \uniform \{0, 1, \ldots, \xi (z) \} \quad \hbox{and} \quad U_c = \uniform \{0, 1, \ldots, c \} $$
 be independent for all~$z \in \V$ and~$c \in \N$. Then,
 $$ \begin{array}{rcl}
     P (\save_{t + 1} = \xi \,| \,\save_t = \xi) & \n \geq \n &
        \displaystyle \frac{1}{\card (\E)} \sum_{(x, y) \in \E} P (U (x, \xi) = \xi (x), U (y, \xi) = \xi (y)) \vspace*{4pt} \\ & \n = \n &
        \displaystyle \frac{1}{\card (\E)} \sum_{(x, y) \in \E} \frac{1}{(\xi (x) + 1)(\xi (y) + 1)} \geq
        \displaystyle \bigg(\frac{1}{M + 1} \bigg)^2 > 0 \end{array} $$
 so the process is aperiodic.
 Also, letting~$(x, y) \in \E$ and~$\xi \in \C_{N, M - 1}$,
 $$ \begin{array}{rcl}
      P (\save_{t + 1} = \xi^y \,| \,\save_t = \xi^x) & \n \geq \n &
         \displaystyle \frac{P (\uniform \{0, 1, \ldots, \xi (x) + 1 \} = \xi (x), U (y, \xi) = \xi (y), U_1 = 0)}{\card (\E)} \vspace*{4pt} \\ & \n = \n &
         \displaystyle \frac{1}{\card (\E)} \ \frac{1}{2 \,(\xi (x) + 2)(\xi (y) + 1)} \geq
         \bigg(\frac{1}{N (M + 2)} \bigg)^2 > 0. \end{array} $$
 Repeating the proof of Lemma~\ref{lem:reshuffling-limit}, we deduce that, for all~$\xi, \eta \in \C_{N, M}$,
 $$ P (\save_t = \eta \,| \,\save_0 = \xi) = \bigg(\frac{1}{N (M + 2)} \bigg)^{2MN} > 0 \quad \hbox{for some} \quad t < MN, $$
 so the process is irreducible.
 As previously, convergence to a unique stationary distribution follows from the fact that the process is finite, irreducible and aperiodic.
\end{proof}
\begin{lemma} --
\label{lem:saving-reversible}
 The distribution~$\pi_{\save}$ is reversible and
\begin{equation}
\label{eq:saving-reversible-0}
  \pi_{\save} (\xi) = \frac{\mu (\xi)}{\displaystyle \sum_{\eta \in \C_{N, M}} \mu (\eta)} \quad \hbox{where} \quad \mu (\xi) = \prod_{z \in \V} \,(\xi (z) + 1).
\end{equation}
\end{lemma}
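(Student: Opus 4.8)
The plan is to mimic the structure of the proof of Lemma~\ref{lem:exchange-reversible}: since Lemma~\ref{lem:saving-limit} already guarantees a unique stationary distribution, it suffices to verify the detailed balance equation
$$ \mu (\xi) \,P (\save_{t + 1} = \eta \,| \,\save_t = \xi) = \mu (\eta) \,P (\save_{t + 1} = \xi \,| \,\save_t = \eta) $$
for all $\xi, \eta \in \C_{N, M}$ with $\mu (\xi) = \prod_{z \in \V} (\xi (z) + 1)$, and then invoke uniqueness to conclude that $\pi_{\save}$ is this reversible measure. As in the immediate exchange case, if $\xi$ and $\eta$ do not agree off some edge $(x, y)$ with $\xi(x) + \xi(y) = \eta(x) + \eta(y)$, then both transition probabilities vanish and the equation holds trivially; so I would fix such an edge $(x,y)$ and a pair $\xi, \eta$ satisfying~\eqref{eq:exchange-reversible-1} and compute the one-step transition probability explicitly.

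The key computation is to write out $P (\save_{t + 1} = \eta \,| \,\save_t = \xi)$ by conditioning on the three uniform variables. Summing over $c_x = U_1 \in \{0,\dots,\xi(x)\}$, $c_y = U_2 \in \{0,\dots,\xi(y)\}$, and then over the reshuffling variable $U \in \{0,\dots,\xi(x)+\xi(y)-c_x-c_y\}$, the event $\{\save_{t+1}(x) = \eta(x)\}$ forces $U = \eta(x) - c_x$, which is admissible precisely when $0 \le \eta(x) - c_x \le \xi(x) + \xi(y) - c_x - c_y$, i.e.\ when $c_x \le \eta(x)$ and $c_y \le \xi(x) + \xi(y) - \eta(x) = \eta(y)$. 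Therefore
$$ \card(\E) \cdot P (\save_{t + 1} = \eta \,| \,\save_t = \xi) = \sum_{c_x = 0}^{\min(\xi(x),\eta(x))} \ \sum_{c_y = 0}^{\min(\xi(y),\eta(y))} \frac{1}{(\xi(x)+1)(\xi(y)+1)(\xi(x)+\xi(y)-c_x-c_y+1)}. $$
The crucial point is that the summand depends on $c_x, c_y$ only through $c_x + c_y$ and is symmetric in its dependence on the pair $(\xi,\eta)$: the ranges of summation are $0 \le c_x \le \min(\xi(x),\eta(x))$ and $0 \le c_y \le \min(\xi(y),\eta(y))$, both manifestly symmetric under swapping $\xi \leftrightarrow \eta$, and the factor $\xi(x)+\xi(y)-c_x-c_y+1 = \eta(x)+\eta(y)-c_x-c_y+1$ by the conservation constraint~\eqref{eq:exchange-reversible-1}. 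Hence the quantity $Q_{x,y}(\xi,\eta) := \card(\E)(\xi(x)+1)(\xi(y)+1) P (\save_{t + 1} = \eta \,| \,\save_t = \xi)$ equals $Q_{x,y}(\eta,\xi)$, which immediately rearranges to
$$ (\xi(x)+1)(\xi(y)+1) \, P (\save_{t + 1} = \eta \,| \,\save_t = \xi) = (\eta(x)+1)(\eta(y)+1) \, P (\save_{t + 1} = \xi \,| \,\save_t = \eta). $$
Multiplying both sides by $\prod_{z \notin \{x,y\}} (\xi(z)+1) = \prod_{z \notin \{x,y\}} (\eta(z)+1)$ (equal since $\xi \equiv \eta$ off $\{x,y\}$) yields the detailed balance equation for $\mu$, and uniqueness from Lemma~\ref{lem:saving-limit} finishes the proof.

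The main obstacle, as in the immediate exchange case, is bookkeeping rather than anything conceptual: one must be careful that the double sum's index ranges are correctly expressed as $\min$'s that are visibly symmetric in $\xi$ and $\eta$, and that the constraint $\xi(x)+\xi(y) = \eta(x)+\eta(y)$ is used exactly where needed (both to make the reshuffling denominator symmetric and to rewrite $\xi(x)+\xi(y)-\eta(x)$ as $\eta(y)$). Unlike the immediate exchange model, here there is no cancellation trick needed — the symmetry of the summand and summation domain is direct — so I expect this proof to be slightly cleaner than that of Lemma~\ref{lem:exchange-reversible}. One should also note that the saving-model kernel may assign positive probability to some transitions that the exchange-model kernel does not (and vice versa), so the two models genuinely have different dynamics; it is only their stationary measures that coincide, which is exactly what detailed balance with the same $\mu$ delivers.
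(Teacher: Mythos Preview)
Your proposal is correct and follows essentially the same route as the paper's proof: both condition on the saved amounts $c_x,c_y$, reduce the event $\{\save_{t+1}(x)=\eta(x)\}$ to the indicator $\ind\{c_x\le\eta(x)\}\,\ind\{c_y\le\eta(y)\}$ divided by the reshuffling support size, and then observe that the resulting double sum over $0\le c_x\le \xi(x)\wedge\eta(x)$, $0\le c_y\le \xi(y)\wedge\eta(y)$ is symmetric in $(\xi,\eta)$ once one uses $\xi(x)+\xi(y)=\eta(x)+\eta(y)$. Your remark that this argument is slightly cleaner than the one for the immediate exchange model (no algebraic cancellation needed, the symmetry of the summation domain and summand is immediate) matches the paper's presentation exactly.
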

\begin{proof}
 Let~$\xi \neq \eta$ in~$\C_{N, M}$ be two configurations.
 As for the uniform reshuffling and immediate exchange models, when condition~\eqref{eq:exchange-reversible-1} is not satisfied,
\begin{equation}
\label{eq:saving-reversible-1}
  P (\save_{t + 1} = \eta \,| \,\save_t = \xi) = P (\save_{t + 1} = \xi \,| \,\save_t = \eta) = 0.
\end{equation}
 To study the transition probability when~\eqref{eq:exchange-reversible-1} holds, let
 $$ U (z, \xi) = \uniform \{0, 1, \ldots, \xi (z) \} \quad \hbox{and} \quad U_c = \uniform \{0, 1, \ldots, c \} $$
 be independent for all~$z \in \V$ and~$c \in \N$.
 By conditioning on all the possible values of~$U (x, \xi)$ and~$U (y, \xi)$ and using independence, we get
\begin{equation}
\label{eq:saving-reversible-2}
  P (\save_{t + 1} = \eta \,| \,\save_t = \xi) =
     \frac{1}{\card (\E)} \ \sum_{c_x = 0}^{\xi (x)} \ \sum_{c_y = 0}^{\xi (y)} \
     \frac{P (c_x + U_{\xi (x) + \xi (y) - c_x - c_y} = \eta (x))}{(\xi (x) + 1)(\xi (y) + 1)}.
\end{equation}
 By conditioning on all the possible values of~$U_{\xi (x) + \xi (y) - c_x - c_y}$ and using again independence, the numerator in the sum above can be written as
\begin{equation}
\label{eq:saving-reversible-3}
  \begin{array}{l}
    P (c_x + U_{\xi (x) + \xi (y) - c_x - c_y} = \eta (x)) =
       \displaystyle \frac{\ind \{c_x \leq \eta (x) \leq \xi (x) + \xi (y) - c_y \}}{\xi (x) + \xi (y) - c_x - c_y + 1} \vspace*{8pt} \\ \hspace*{60pt} =
       \displaystyle \frac{\ind \{c_x \leq \eta (x) \leq \eta (x) + \eta (y) - c_y \}}{\xi (x) + \xi (y) - c_x - c_y + 1} =
       \displaystyle \frac{\ind \{c_x \leq \eta (x) \} \,\ind \{c_y \leq \eta (y) \}}{\xi (x) + \xi (y) - c_x - c_y + 1}. \end{array}
\end{equation}
 Combining~\eqref{eq:saving-reversible-2} and~\eqref{eq:saving-reversible-3}, we obtain that
 $$ Q_{x,y} (\xi, \eta) = \card (\E) \,(\xi (x) + 1)(\xi (y) + 1) \,P (\save_{t + 1} = \eta \,| \,\save_t = \xi) $$
 can be written using symmetry as
 $$ \begin{array}{rcl}
      Q_{x,y} (\xi, \eta) & \n = \n &
    \displaystyle \sum_{c_x = 0}^{\xi (x)} \ \sum_{c_y = 0}^{\xi (y)} \ \frac{\ind \{c_x \leq \eta (x) \} \,\ind \{c_y \leq \eta (y) \}}{\xi (x) + \xi (y) - c_x - c_y + 1} \vspace*{4pt} \\ & \n = \n &
    \displaystyle \sum_{c_x = 0}^{\xi (x) \wedge \eta (x)} \ \sum_{c_y = 0}^{\xi (y) \wedge \eta (y)} \bigg(\frac{1}{\xi (x) + \xi (y) - c_x - c_y + 1} \bigg) \vspace*{4pt} \\ & \n = \n &
    \displaystyle \sum_{c_x = 0}^{\eta (x) \wedge \xi (x)} \ \sum_{c_y = 0}^{\eta (y) \wedge \xi (y)} \bigg(\frac{1}{\eta (x) + \eta (y) - c_x - c_y + 1} \bigg) =
      Q_{x,y} (\eta, \xi) \end{array} $$
 from which it follows that
\begin{equation}
\label{eq:saving-reversible-4}
  \begin{array}{l}
    (\xi (x) + 1)(\xi (y) + 1) \,P (\save_{t + 1} = \eta \,| \,\save_t = \xi) \vspace*{4pt} \\ \hspace*{80pt} =
    (\eta (x) + 1)(\eta (y) + 1) \,P (\save_{t + 1} = \xi \,| \,\save_t = \eta). \end{array}
\end{equation}
 Combining~\eqref{eq:saving-reversible-1} and~\eqref{eq:saving-reversible-4}, we conclude that, in any case,
 $$ \mu (\xi) \,P (\save_{t + 1} = \eta \,| \,\save_t = \xi) = \mu (\eta) \,P (\save_{t + 1} = \xi \,| \,\save_t = \eta) \quad \hbox{where} \quad
    \mu (\xi) = \prod_{z \in \V} \,(\xi (z) + 1), $$
 showing that~$\pi_{\save}$ is reversible and satisfies~\eqref{eq:saving-reversible-0}.
\end{proof}


\section{Proof of Theorems~\ref{th:exchange} and~\ref{th:saving}}
\label{sec:exchange-saving}

\indent Lemmas~\ref{lem:exchange-limit}--\ref{lem:saving-reversible} in the previous two sections imply that, though the evolution rules of the immediate exchange
 model and of the uniform saving model are different, both processes converge to the same stationary distribution~$\pi = \pi_{\imex} = \pi_{\save}$ which is characterized by
 $$ \pi (\xi) = \frac{\mu (\xi)}{\displaystyle \sum_{\eta \in \C_{N, M}} \mu (\eta)} \quad \hbox{where} \quad \mu (\xi) = \prod_{z \in \V} \,(\xi (z) + 1). $$
 To complete the proof of Theorems~\ref{th:exchange} and~\ref{th:saving}, the last step is to find a more explicit expression of the stationary distribution by
 computing the denominator
 $$ \Lambda (N, M) = \sum_{\xi \in \C_{N, M}} \prod_{z \in \V} \,(\xi (z) + 1) = \sum_{c_1 + \cdots + c_N = M} (c_1 + 1)(c_2 + 1) \cdots (c_N + 1). $$
 To compute~$\Lambda (N, M)$, we start with the following technical lemma.
\begin{lemma} --
\label{lem:combinatorics}
 For all~$M, K \in \N$,
 $$ S (M, K) = \sum_{c = 0}^M \ (c + 1) {M - c + K \choose K} = {M + K + 2 \choose K + 2}. $$
\end{lemma}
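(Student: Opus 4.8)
The plan is to prove the identity $S(M,K) = \binom{M+K+2}{K+2}$ by induction on $M$, exploiting the hockey-stick identity $\sum_{c=0}^{M} \binom{c+K}{K} = \binom{M+K+1}{K+1}$ and the basic recurrence for binomial coefficients. First I would check the base case $M=0$: the sum reduces to the single term $(0+1)\binom{K}{K} = 1$, while $\binom{0+K+2}{K+2} = 1$, so equality holds. For the inductive step, assume $S(M-1,K) = \binom{M-1+K+2}{K+2} = \binom{M+K+1}{K+2}$. The difference $S(M,K) - S(M-1,K)$ can be computed term by term after reindexing: in $S(M,K)$ the coefficient of $\binom{M-c+K}{K}$ in going from $M-1$ to $M$ shifts, and one finds $S(M,K) - S(M-1,K) = \sum_{c=0}^{M} \binom{c+K}{K} = \binom{M+K+1}{K+1}$ by the hockey-stick identity. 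Then $S(M,K) = \binom{M+K+1}{K+2} + \binom{M+K+1}{K+1} = \binom{M+K+2}{K+2}$ by Pascal's rule, closing the induction.

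An alternative (and perhaps cleaner) route, which I would mention as a remark, is a direct combinatorial/generating-function argument: $(c+1)\binom{M-c+K}{K}$ counts something like pairs of lattice paths or multiset selections, and the whole sum telescopes into a single "stars and bars" count for $K+3$ bins summing to $M$. Concretely, $\sum_{c=0}^{M}(c+1)\binom{M-c+K}{K}$ is the coefficient of $x^M$ in $\left(\sum_{c\geq 0}(c+1)x^c\right)\left(\sum_{j\geq 0}\binom{j+K}{K}x^j\right) = \frac{1}{(1-x)^2}\cdot\frac{1}{(1-x)^{K+1}} = \frac{1}{(1-x)^{K+3}}$, whose coefficient of $x^M$ is exactly $\binom{M+K+2}{K+2}$. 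This gives the result in one line once the generating-function machinery is granted.

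The main obstacle, if one sticks with the inductive proof, is bookkeeping the reindexing in the difference $S(M,K)-S(M-1,K)$ cleanly — one must be careful that the extra term at $c=M$ and the shift of the running index conspire to produce precisely $\sum_{c=0}^M\binom{c+K}{K}$ rather than something off by a boundary term. With the generating-function approach there is essentially no obstacle beyond citing the standard expansion $\frac{1}{(1-x)^n} = \sum_{k\geq 0}\binom{n+k-1}{k}x^k$, so I would likely present that as the primary proof and keep the induction as a self-contained backup. Either way the lemma is elementary; the payoff is that applying it iteratively collapses the $N$-fold sum $\Lambda(N,M)$ into a single binomial coefficient, which is what Theorems~\ref{th:exchange} and~\ref{th:saving} need.
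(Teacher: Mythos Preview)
Your proof is correct, but the route differs from the paper's. The paper inducts on the sum~$M+K$, checking both base cases~$M=0$ and~$K=0$, and in the inductive step splits~$\binom{M-c+K}{K}$ via Pascal's rule to obtain the two-variable recurrence~$S(M,K)=S(M-1,K)+S(M,K-1)$; one more application of Pascal then closes the induction. You instead induct on~$M$ alone: after the reindexing~$d=M-c$ the difference~$S(M,K)-S(M-1,K)$ telescopes to~$\sum_{d=0}^{M}\binom{d+K}{K}$, and the hockey-stick identity together with Pascal's rule finishes the argument. Your version is a bit leaner --- only one base case and no need to decrement~$K$ --- at the cost of invoking the hockey-stick identity, which the paper avoids by staying entirely within Pascal's rule. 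Your generating-function remark is also valid and is indeed the shortest route; the paper does not use it, presumably to keep the exposition fully self-contained and elementary.
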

\begin{proof}
 We prove the result by induction on~$M + K$.
 The fact that
 $$ \begin{array}{rcl}
      S (0, K) & \n = \n & \displaystyle \sum_{c = 0}^0 \ (c + 1) {0 - c + K \choose K} = {K \choose K} = 1 = {0 + K + 2 \choose K + 2} \vspace*{4pt} \\
      S (M, 0) & \n = \n & \displaystyle \sum_{c = 0}^M \ (c + 1) {M - c + 0 \choose 0} = \sum_{c = 0}^M \ (c + 1) = \frac{(M + 1)(M + 2)}{2} = {M + 0 + 2 \choose 0 + 2} \end{array} $$
 shows that the result holds when~$M = 0$ or~$K = 0$.
 Now, let~$m \in \N^*$ and assume that the result holds whenever~$M + K < m$.
 Using the well-known identity
 $$ {n \choose k} = {n - 1 \choose k} + {n - 1 \choose k - 1} \quad \hbox{for all} \quad 1 \leq k < n $$
 consecutively in the following two cases
 $$ \begin{array}{rcl}
          n = M - c + K \ \ \hbox{and} \ \ k = K & \hbox{with} & K \geq 1 \ \ \hbox{and} \ \ M > c \vspace*{4pt} \\
      n = M + K + 2 \ \ \hbox{and} \ \ k = K + 2 & \hbox{with} & K \geq 1 \ \ \hbox{and} \ \ M \geq 1 \end{array} $$
 and assuming that~$M + K = m$ with~$M, K \geq 1$, we get
 $$ \begin{array}{rcl}
     S (M, K) & \n = \n & \displaystyle \sum_{c = 0}^{M - 1} \ (c + 1) \left[{(M - 1) - c + K \choose K} + {M - c + (K - 1) \choose K - 1} \right] + (M + 1) \vspace*{4pt} \\
              & \n = \n & \displaystyle  S (M - 1, K) + \sum_{c = 0}^M \ (c + 1) {M - c + (K - 1) \choose K - 1} = S (M - 1, K) + S (M, K - 1) \vspace*{6pt} \\
              & \n = \n & \displaystyle {M + K + 2 - 1 \choose K + 2} + {M + K + 2 - 1 \choose K + 2 - 1} = {M + K + 2 \choose K + 2}. \end{array} $$
 This completes the proof.
\end{proof} \\ \\
 Using Lemma~\ref{lem:combinatorics}, we can now compute~$\Lambda (N, M)$.
\begin{lemma} --
\label{lem:total-mass}
 For all~$N, M \geq 1$, we have
 $$ \Lambda (N, M) = \sum_{c_1 + \cdots + c_N = M} (c_1 + 1)(c_2 + 1) \cdots (c_N + 1) = {M + 2N - 1 \choose 2N - 1}. $$
\end{lemma}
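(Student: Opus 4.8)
The plan is to prove the identity by induction on the number of vertices~$N$, using the recursive structure of the sum and invoking Lemma~\ref{lem:combinatorics} for the inductive step. The base case is~$N = 1$: here~$\C_{1, M}$ consists of the single configuration with all~$M$ coins at the one vertex, so~$\Lambda (1, M) = M + 1 = {M + 1 \choose 1} = {M + 2N - 1 \choose 2N - 1}$ when~$N = 1$, which matches.

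For the inductive step, assume~$N \geq 2$ and that the formula holds with~$N$ replaced by~$N - 1$, i.e.,~$\Lambda (N - 1, m) = {m + 2N - 3 \choose 2N - 3}$ for all~$m \in \N$. The idea is to single out the last coordinate~$c_N$ and factor the sum:
$$ \Lambda (N, M) = \sum_{c_N = 0}^M \,(c_N + 1) \sum_{c_1 + \cdots + c_{N - 1} = M - c_N} (c_1 + 1) \cdots (c_{N - 1} + 1)
   = \sum_{c_N = 0}^M \,(c_N + 1) \,\Lambda (N - 1, M - c_N). $$
Applying the induction hypothesis to the inner factor and then renaming~$c_N$ as~$c$, this becomes
$$ \Lambda (N, M) = \sum_{c = 0}^M \,(c + 1) {M - c + 2N - 3 \choose 2N - 3} = S (M, 2N - 3), $$
where~$S$ is the quantity from Lemma~\ref{lem:combinatorics} with~$K = 2N - 3 \geq 1$. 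That lemma gives~$S (M, 2N - 3) = {M + (2N - 3) + 2 \choose (2N - 3) + 2} = {M + 2N - 1 \choose 2N - 1}$, which completes the induction.

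There is no real obstacle here: the only points requiring a little care are checking that the base case index~$2N - 1$ specializes correctly at~$N = 1$, and noting that~$K = 2N - 3 \geq 1$ precisely when~$N \geq 2$, so Lemma~\ref{lem:combinatorics} (whose inductive proof used~$K \geq 1$) applies in the inductive step while the remaining case~$N = 1$ is handled by hand. As an aside, the same identity follows instantly from generating functions, since~$\sum_{M \geq 0} \Lambda (N, M) \,x^M = \big(\sum_{c \geq 0} (c + 1) x^c\big)^N = (1 - x)^{-2N}$ and the coefficient of~$x^M$ in~$(1 - x)^{-2N}$ is~${M + 2N - 1 \choose 2N - 1}$; but the inductive argument above is the natural route given that Lemma~\ref{lem:combinatorics} has already been established.
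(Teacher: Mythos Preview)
Your proof is correct and follows essentially the same route as the paper: induction on~$N$, with the base case~$N = 1$ checked directly and the inductive step obtained by separating out~$c_N$, applying the induction hypothesis to~$\Lambda(N-1, M - c_N)$, and invoking Lemma~\ref{lem:combinatorics} with~$K = 2N - 3$. One small remark: Lemma~\ref{lem:combinatorics} is in fact stated and proved for all~$K \in \N$, including~$K = 0$, so your caveat that it requires~$K \geq 1$ is unnecessary (though harmless, since you treat~$N = 1$ separately anyway).
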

\begin{proof}
 We prove the result by induction on~$N$.
 Observing that
 $$ \Lambda (1, M) = \sum_{c_1 = M} (c_1 + 1) = (M + 1) = {M + 2 - 1 \choose 2 - 1} $$
 shows that the result holds for~$N = 1$.
 Now, fix~$N \geq 2$ and assume that the result holds for~$N - 1$ vertices.
 Decomposing according to the possible values of~$c_N$, we get
 $$ \begin{array}{rcl}
    \Lambda (N, M) & \n = \n & \displaystyle \sum_{c_N = 0}^M \ (c_N + 1) \sum_{c_1 + \cdots + c_{N - 1} = M - c_N} (c_1 + 1)(c_2 + 1) \cdots (c_{N - 1} + 1) \vspace*{4pt} \\
                  & \n = \n & \displaystyle \sum_{c = 0}^M \ (c + 1) \,\Lambda (N - 1, M - c) = \sum_{c = 0}^M \ (c + 1) {M - c + 2N - 3 \choose 2N - 3}. \end{array} $$
 Finally, applying Lemma~\ref{lem:combinatorics}, we obtain
 $$ \Lambda (N, M) = S (M, 2N - 3) = {M + 2N - 3 + 2 \choose 2N - 3 + 2} = {M + 2N - 1 \choose 2N - 1}, $$
 which completes the proof.
\end{proof} \\ \\
 We are now ready to prove the theorems. \\ \\
\begin{proofof}{Theorems~\ref{th:exchange} and~\ref{th:saving}}
 Combining~Lemmas~\ref{lem:exchange-limit} and~\ref{lem:exchange-reversible}, we obtain that, regardless of the initial configuration and regardless of the choice of
 vertex~$x \in \V$,
 $$ \begin{array}{l}
    \displaystyle \lim_{t \to \infty} P (\imex_t (x) = c) =
    \sum_{\xi : \xi (x) = c} \pi (\xi) \vspace*{4pt} \\ \hspace*{20pt} =
    \displaystyle \sum_{c_1 + \cdots + c_{N - 1} = M - c} \frac{(c_1 + 1) \cdots (c_{N - 1} + 1)(c + 1)}{\Lambda (N, M)} =
    \frac{(c + 1) \,\Lambda (N - 1, M - c)}{\Lambda (N, M)}. \end{array} $$
 This, together with Lemma~\ref{lem:total-mass}, implies that
 $$ \lim_{t \to \infty} P (\imex_t (x) = c) = (c + 1) {M - c + 2N - 3 \choose 2N - 3} \bigg/ {M + 2N - 1 \choose 2N - 1}. $$
 This proves the first part of Theorem~\ref{th:exchange}.
 Now, observe that
 $$ \begin{array}{l}
    \displaystyle (c + 1) {M - c + 2N - 3 \choose 2N - 3} \bigg/ {M + 2N - 1 \choose 2N - 1} \vspace*{10pt} \\ \hspace*{20pt} =
    \displaystyle (c + 1) \ \frac{(2N - 1)(2N - 2)}{(M + 2N - 1)(M + 2N - 2)} \ \frac{(M - c + 2N - 3) \cdots (M - c + 1)}{(M + 2N - 3) \cdots (M + 1)}. \end{array} $$
 In particular, when~$N$ and~$T$ are large, this is approximately
 $$ (c + 1) \bigg(\frac{2N}{M} \bigg)^2 \bigg(1 - \frac{c}{M} \bigg)^{2N} \approx \frac{4c}{T^2} \ \bigg(1 - \frac{c}{NT} \bigg)^{2N} \approx \frac{4c}{T^2} \ e^{- 2c/T}. $$
 This completes the proof of Theorem~\ref{th:exchange}.
 The proof of Theorem~\ref{th:saving} is exactly the same since both models converge to the same stationary distribution~$\pi = \pi_{\imex} = \pi_{\save}$.
\end{proofof}


\end{document}